\documentclass[a4paper,11pt]{article}

\usepackage{amsmath}
\usepackage{amsfonts}
\usepackage{amssymb}
\usepackage{amsthm}

\newcommand{\p}{\varphi}
\newcommand{\R}{\mathbb{R}}
\newcommand{\weight}[1]{\langle #1\rangle}

\newcommand{\ZuWeis}{\mathrel{\mathop:\!\!=}}
\newcommand{\bv}{\mathbf{v}}
\newcommand{\tbj}{\widetilde{\mathbf{J}}}
\newcommand{\bpsi}{\boldsymbol{\psi}}
\newcommand{\bJ}{\mathbf{J}}
\newcommand{\N}{\ensuremath{\mathbb{N}}}
\newcommand{\Hone}{H^1_{(0)}}
\newcommand{\supp}{\operatorname{supp}}
\newcommand{\di}{\operatorname{div}}
\newcommand{\ol}[1]{\overline{#1}}
\newcommand{\sd}{\, d}
\newcommand{\e}{\varepsilon}

\newtheorem{definition}{Definition}[section]
\newtheorem{remark}[definition]{Remark}
\newtheorem{lemma}[definition]{Lemma}
\newtheorem{theorem}[definition]{Theorem}
\newtheorem{assumption}[definition]{Assumption}

\numberwithin{equation}{section}  

\begin{document}

\begin{titlepage}
\title{On an Incompressible Navier-Stokes/Cahn-Hilliard System with Degenerate Mobility}
\author{  Helmut Abels\footnote{Fakult\"at f\"ur Mathematik,  
Universit\"at Regensburg,
93040 Regensburg,
Germany, e-mail: {\sf helmut.abels@mathematik.uni-regensburg.de}}, Daniel Depner\footnote{Fakult\"at f\"ur Mathematik,  
Universit\"at Regensburg,
93040 Regensburg,
Germany, e-mail: {\sf daniel.depner@mathematik.uni-regensburg.de}}, and
Harald Garcke\footnote{Fakult\"at f\"ur Mathematik,  
Universit\"at Regensburg,
93040 Regensburg,
Germany, e-mail: {\sf harald.garcke@mathematik.uni-regensburg.de}}}
\date{}
\end{titlepage}
\maketitle
\begin{abstract}
 We prove existence of weak solutions for a diffuse interface model
 for the flow of two viscous incompressible Newtonian fluids in a bounded domain by allowing for a degenerate 
 mobility. The model has been developed by Abels, Garcke and Gr\"un for fluids with different densities and leads to 
 a solenoidal velocity field. It is given by a non-homogeneous Navier-Stokes system with a modified convective 
 term coupled to a Cahn-Hilliard system, such that an energy estimate is fulfilled which follows from the fact 
 that the model is thermodynamically consistent.
\end{abstract}
\noindent{\bf Key words:} Two-phase flow, Navier-Stokes equations,
 diffuse interface model, mixtures of viscous fluids, Cahn-Hilliard equation, degenerate mobility

\noindent{\bf AMS-Classification:} 
Primary: 76T99; 
Secondary: 35Q30, 
35Q35, 
76D03, 
76D05, 
76D27, 
76D45 

\section{Introduction} \label{intro}

Classically the interface between two immiscible, viscous fluids has
been modelled in the context of sharp interface approaches, see
e.g. \cite{Mue85}. But in the context of sharp interface models it is
difficult to describe topological changes, as e.g. pinch off and
situations where different interfaces or different parts of an
interface connect. In the last 20 years phase field approaches have
been a promising new approach to model interfacial evolution in
situations where interfacial energy effects are important, see
e.g. \cite{Che02}. In phase field approaches a phase field or order
parameter is introduced which rapidly changes its value in the
interfacial region and attains two prescribed values away from the
interface.

For two-phase flow of immiscible, viscous fluids a phase-field
approach first has been introduced by Hohenberg and Halperin
\cite{HH77}, the so-called ``Model H''. 
In their work the Cahn-Hilliard equation was coupled to
the Navier-Stokes system in such a way that capillary forces on the
interface are modelled with the help of the phase field. The approach
of Hohenberg and Halperin \cite{HH77} was restricted to the case where
the densities of the two fluids are the same or at least are very
close (``matched densities''). It has been later shown by Gurtin,
Polignone, Vi\~{n}als \cite{GPV96} that the model can be derived in the
context of rational thermodynamics. In particular global and local
energy inequalities are true. These global energy estimates can be
used to derive a priori estimates and this has been used by Boyer~\cite{Boy99}
and by Abels~\cite{Abe09b} for proofs of existence results. 

Often the densities in two phase flow are quite different. Therefore,
there have been several attempts to derive phase field models for two
phase flow with non-matched densities. Lowengrub and Truskinovsky
\cite{LT98} derived a first thermodynamically consistent phase field
model for the case of different densities. The model of Lowengrub and
Truskinovsky is based on a barycentric velocity and hence the overall
velocity field turns out to be not divergence free in general. In addition, the
pressure enters the Cahn-Hilliard equation and as a result the
coupling between the Cahn-Hilliard equation and the Navier-Stokes
equations is quite strong. This and the fact that the velocity field is
not divergence free makes numerical and analytical approaches quite
difficult. To the authors knowledge there have been so far no
numerical simulations for the full Lowengrub-Truskinovsky model. With
respect to analytical results we refer to the works of Abels~\cite{Abe09a, Abe12} for existence results. 

In a paper by Ding, Spelt and Shu \cite{DSS07} a generalization of
Model H for non-matched densities and a divergence free velocity
field has been derived. However it is not known whether this model is
thermodynamically consistent. A first phase field model for
non-matched densities and a divergence free velocity field which in
addition fulfills local and hence global free energy inequalities has
been derived by Abels, Garcke and Gr\"un \cite{AGG12}. The model in
\cite{AGG12} is given by the following system of
Navier-Stokes/Cahn-Hilliard equations:
\begin{align*}
 \partial_t (\rho(\varphi) \mathbf{v}) + \operatorname{div} ( \bv \otimes(\rho(\varphi) \bv + \tbj)) 
  &- \operatorname{div} (2 \eta(\varphi) D \bv) + \nabla p  & \\
  & = - \operatorname{div}(a(\varphi) \nabla \varphi \otimes \nabla \varphi)& \mbox{in } \, Q_T ,  \\
 \operatorname{div} \, \bv &= 0& \mbox{in } \, Q_T , \\
 \partial_t \varphi + \bv \cdot \nabla \varphi 
  &= \mbox{div}\left(m(\varphi) \nabla \mu \right)& \mbox{in } \, Q_T ,\\
 \mu = \Psi'(\varphi) + a'(\varphi) \frac{|\nabla \varphi|^2}{2} 
  &- \operatorname{div}\left( a(\varphi) \nabla \varphi \right)& \mbox{in } \, Q_T , 
\end{align*}
where $\tbj = -\frac{\tilde{\rho}_2 - \tilde{\rho}_1}{2} m(\varphi) \nabla \mu$, 
$Q_T=\Omega\times(0,T)$ for $0<T<\infty$, and $\Omega \subset \mathbb{R}^d$, $d=2,3$, is a sufficiently 
smooth bounded domain. We close the system  with the boundary and initial conditions
\begin{alignat*}{2}
 \bv|_{\partial \Omega} &= 0 &\qquad& \text{on}\ \partial\Omega\times (0,T), \\
 \partial_n \varphi|_{\partial \Omega} = \partial_n \mu|_{\partial \Omega} &= 0&& \text{on}\ \partial\Omega\times (0,T),  \\
 \left(\bv , \varphi \right)|_{t=0} &= \left( \bv_0 , \varphi_0 \right) &&\text{in}\ \Omega, 
\end{alignat*}
where $\partial_n \varphi = n\cdot \nabla \varphi$ and $n$ denotes the exterior normal at $\partial\Omega$.
Here $\mathbf{v}$ is the volume averaged velocity, 
$\rho=\rho(\varphi)$ is the
density of the mixture of the two fluids, $\varphi$ is the difference of the volume fractions of the two fluids
and we assume a constitutive relation between $\rho$ and the order parameter $\varphi$ given by
$\rho(\varphi) = \frac{1}{2} (\tilde{\rho}_1 + \tilde{\rho}_2) + \frac{1}{2} (\tilde{\rho}_2 - \tilde{\rho}_1) \varphi$, 
see \cite{ADG12} for details. In addition,
$p$ is the pressure, $\mu$ is the chemical potential associated to $\varphi$ and $\tilde{\rho}_1$, $\tilde{\rho}_2$ 
are the specific constant mass densities of the unmixed fluids.
Moreover, ${D}\mathbf{v}= \frac12(\nabla \mathbf{v} + \nabla \mathbf{v}^T)$,
$\eta(\varphi)>0$ is a viscosity coefficient, and $m(\varphi) \geq 0$ is a degenerate mobility coefficient. Furthermore,
$\Psi(\varphi)$ is the homogeneous free energy density for the mixture and the (total) free energy of the system is given by
\begin{align*}
E_{\mbox{\footnotesize free}}(\varphi) =   \int_{\Omega} \left( \Psi(\varphi) + a(\varphi)\frac{|\nabla \varphi|^2}{2} \right)\sd x
\end{align*}
for some positive coefficient $a(\varphi)$.
The kinetic energy is given by $E_{\mbox{\footnotesize kin}}(\p,\bv) \hspace*{-1pt}= \int_\Omega \rho(\varphi) \frac{|\bv|^2}{2} \, dx$ and the 
total energy as the sum of the kinetic and free energy is
\begin{align} 
\begin{split} \label{totalenergy}
 E_{\mbox{\footnotesize tot}}(\p,\bv)  
         &= E_{\mbox{\footnotesize kin}}(\p,\bv) + E_{\mbox{\footnotesize free}}(\p) \\
         &= \int_\Omega \rho(\varphi)\frac{|\bv|^2}{2} \, dx + \int_\Omega \left( \Psi(\p) + a(\varphi)\frac{|\nabla \varphi|^2}{2} \right) dx .
\end{split}
\end{align}
In addition there have been further modelling attempts for two phase
flow with different densities. We refer to Boyer \cite{Boy02} and the
recent work of Aki et al. \cite{ADGK12}. We remark that for the model
of Boyer no energy inequalities are known and the model of Aki et 
al.\,does not lead to velocity fields which are divergence free. 

In \cite{ADG12} an existence result for the above
Navier-Stokes/Cahn-Hil\-liard model has been shown in the case of a
non-degenerate mobility $m(\varphi)$. As is discussed in \cite{AGG12}
the case with non-degenerate mobility can lead to Ostwald ripening
effects, i.e., in particular larger drops can grow to the expense of
smaller ones. In many applications this is not reasonable and as
pointed out in \cite{AGG12} degenerate mobilities avoid Ostwald ripening
and hence the case of degenerate mobilities is very important in
applications. In what follows we assume that $m(\varphi) =
1-\varphi^2$ for $|\varphi| \leq 1$ and extend this by zero to all of
$\R$.  In this way we do not allow for diffusion through the bulk, i.e., the
region where $\varphi = 1$ resp. $\varphi = -1$, but only in the 
interfacial region, where $|\varphi|<1$. The degenerate mobility leads to the
physically reasonable bound $|\varphi| \leq 1$ for the order parameter
$\varphi$, which is the difference of volume fractions and therefore
we can consider in this work a smooth homogeneous free energy density
$\Psi$ in contrast to the previous work~\cite{ADG12}.

For the Cahn-Hilliard equations without the coupling to the
Navier-Stokes equations Elliott and Garcke~\cite{EG96} considered the
case of a degenerate mobility, see also Gr\"un \cite{Gru95}. We will
use a suitable testing procedure from the work \cite{EG96} to get a
bound for the second derivatives of a function of $\varphi$ in the
energy estimates of Lemma \ref{lem:energyestimate}. We point out that
our result is also new for the case of model $H$ with degenerate
mobility, i.e., $\tilde{\rho}_1 = \tilde{\rho}_2$, which implies $\widetilde{\bJ} =0$ in the above
Navier-Stokes/Cahn-Hilliard system.

The structure of the article is as follows: In Section 2 we summarize
some notation and preliminary results.  Then, in Section 3, we
reformulate the Navier-Stokes/Cahn-Hilliard system suitably, define weak solutions and state our
main result on existence of weak solutions. For the proof of the
existence theorem in Subsections 3.2 and 3.3 we approximate the
equations by a problem with positive mobility $m_\e$ and singular
homogeneous free energy density $\Psi_\e$. For the solution
$(\bv_\e,\varphi_\e,\bJ_\e)$ of the approximation (with $\bJ_\e = -
m_\e(\varphi_\e) \nabla \mu_\e$) we derive suitable energy estimates
to get weak limits. Then we extend the weak convergences to strong
ones by using methods similar to the previous work of the authors
\cite{ADG12}, careful estimates of the additional singular free energy density 
and by an additional subtle argument with the help of
time differences and a theorem of Simon \cite{Sim87}. We remark that
this last point would be easier in the case of a constant coefficient
$a(\varphi)$ in the free energy. Finally we can pass to the limit $\e
\to 0$ in the equations for the weak solutions
$(\bv_\e,\varphi_\e,\bJ_\e)$ and recover the identities for the weak
solution of the main problem.

\section{Preliminaries and Notation} \label{prelimi}

We denote 
$a\otimes b = (a_i b_j)_{i,j=1}^d$ for $a,b\in \R^d$ and $A_{\operatorname{sym}}= \frac12 (A+A^T)$ for a matrix $A\in \R^{d\times d}$.
If $X$ is a Banach space and $X'$ is its dual, then 
\begin{equation*}
  \weight{f,g} \equiv \weight{f,g}_{X',X} = f(g), \qquad f\in X', \;g\in X,
\end{equation*}
denotes the duality product. We write $X\hookrightarrow \hookrightarrow Y$ if $X$ is compactly embedded into $Y$. 
Moreover, if $H$ is a Hilbert space, $(\cdot\,,\cdot )_H$ denotes its inner product. 
Moreover, we use the abbreviation $(.\,,.)_{M}=(.\,,.)_{L^2(M)}$.   

\medskip

\noindent {\bf Function spaces:}
If $M\subseteq \R^d$ is measurable, 
$L^q(M)$, $1\leq q \leq \infty$, denotes the usual Lebesgue-space and $\|.\|_q$
its norm.
Moreover, $L^q(M;X)$ denotes the set of all strongly measurable
$q$-integrable functions if $q \in [1,\infty)$ and essentially bounded strongly measurable functions, 
if $q = \infty$, where $X$ is a Banach space. 

Recall that, if $X$ is a Banach space with the Radon-Nikodym property, then 
\begin{equation*}
L^q(M;X)'= L^{q'}(M;X')\qquad \text{for every}\ 1\leq q < \infty 
\end{equation*}
by means of the duality product
$
  \weight{f,g}= \int_{M} \weight{f(x),g(x)}_{X',X}  dx 
$
for $f\in L^{q'}(M;X')$, $g\in L^{q}(M;X)$. If $X$ is reflexive or $X'$ is
separable, then $X$ has the Radon-Nikodym property, cf. Diestel and Uhl~\cite{DU77}. 

Moreover, we recall the Lemma of Aubin-Lions: If $X_0\hookrightarrow\hookrightarrow X_1 \hookrightarrow X_2$ 
are Banach spaces, $1<p<\infty$, $1\leq q <\infty$, and $I\subset \R$ is a bounded interval, then
\begin{equation}\label{eq:AubinLions}
  \left\{v\in L^p( I; X_0): \frac{dv}{dt} \in L^q(I;X_2) \right\}
  \hookrightarrow\hookrightarrow L^p(I;X_1).
\end{equation}
See J.-L.~Lions~\cite{Lio69} for the case $q>1$ and Simon~\cite{Sim87} or Roub{\'{\i}}\-{\v{c}}ek~\cite{Rou90} for $q=1$.

Let $\Omega \subset \R^d$ be a domain. 
Then $W^k_q(\Omega)$, $k\in \N_0$, $1\leq q\leq \infty$, denotes the usual $L^q$-Sobolev space, 
$W^k_{q,0}(\Omega)$  the closure of $C^\infty_0(\Omega)$ in $W^k_q(\Omega)$,
$W^{-k}_q(\Omega)= (W^k_{q',0}(\Omega))'$, and $W^{-k}_{q,0}(\Omega)= (W^k_{q'}(\Omega))'$. 
We also use the abbreviation $H^k(\Omega) = W^k_2(\Omega)$.

Given $f\in L^1(\Omega)$, we denote by 
$
  f_\Omega = \frac1{|\Omega|}\int_\Omega f(x) \,dx
$
its mean value. Moreover, for $m\in\R$ we set
\begin{equation*}
  L^q_{(m)}(\Omega):=\{f\in L^q(\Omega):f_\Omega=m\}, \qquad 1\leq q\leq \infty.  
\end{equation*}
Then for $f \in L^2(\Omega)$ we observe that 
\begin{align*}
 P_0 f:= f-f_\Omega= f-\frac1{|\Omega|}\int_\Omega f(x) \,dx
\end{align*}
is the orthogonal projection onto $L^2_{(0)}(\Omega)$. 
Furthermore,
we define
\begin{equation*}
 \Hone\equiv\Hone (\Omega)= H^1(\Omega)\cap L^2_{(0)}(\Omega), \qquad (c,d)_{\Hone(\Omega)} := (\nabla c,\nabla d)_{L^2(\Omega)}.  
\end{equation*}
Then $\Hone(\Omega)$ is a Hilbert space due to Poincar\'e's inequality.

\medskip

\noindent
{\bf Spaces of solenoidal vector-fields:}
For a bounded domain $\Omega \subset \R^d$ we denote by $C^\infty_{0,\sigma}(\Omega)$ 
in the following  the space of all
divergence free vector fields in $C^\infty_0(\Omega)^d$ and
$L^2_\sigma(\Omega)$ is its closure in the $L^2$-norm. The corresponding
Helmholtz projection is denoted by $P_\sigma$,
cf. e.g. Sohr \cite{Soh01}. We note that $P_\sigma f = f- \nabla p$, 
where $p \in W^1_2(\Omega)\cap L^2_{(0)}(\Omega)$ is the solution of the weak Neumann problem 
\begin{equation}\label{eq:WeakHelmholtz}
  (\nabla p,\nabla \varphi)_{\Omega} = (f, \nabla \varphi)\quad \text{for all}\ \varphi \in C^\infty(\ol{\Omega}).
\end{equation}

\medskip

\noindent
{\bf Spaces of continuous vector-fields:}
In the following let $I=[0,T]$ with $0<T< \infty$ or let $I=[0,\infty)$ if $T=\infty$ and let $X$ be a Banach
space. Then $BC(I;X)$ is the Banach space of all bounded and continuous
$f\colon I\to X$ equipped with the supremum norm and $BUC(I;X)$ is the
subspace of all bounded and uniformly continuous functions. Moreover, we
define $BC_w(I;X)$ as the topological vector space of all bounded and weakly
continuous functions $f\colon I\to X$. By $C^\infty_0(0,T;X)$ we denote
the vector space of all smooth functions $f\colon (0,T)\to X$ with $\supp
f\subset\subset (0,T)$.
We say that $f\in W^1_p(0,T;X)$ for $1\leq p <\infty$, if and only if $f,
\frac{df}{dt}\in L^p(0,T;X)$, where $\frac{df}{dt}$ denotes the vector-valued
distributional derivative of $f$.
Finally, we note:
\begin{lemma}\label{lem:CwEmbedding}
  Let $X,Y$ be two Banach spaces such that $Y\hookrightarrow X$ and $X'\hookrightarrow Y'$ densely.
  Then $L^\infty(I;Y)\cap BUC(I;X) \hookrightarrow BC_w(I;Y)$.
\end{lemma}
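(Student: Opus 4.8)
The plan is to check the two defining requirements of $BC_w(I;Y)$ for an arbitrary $f\in L^\infty(I;Y)\cap BUC(I;X)$: that $f(t)$ lies in $Y$ with a uniform bound for \emph{every} $t\in I$ (and not just for almost every $t$), and that $t\mapsto \weight{g,f(t)}_{Y',Y}$ is continuous for each fixed $g\in Y'$. I would record at the outset that once the pointwise bound $\sup_{t\in I}\|f(t)\|_Y\le \|f\|_{L^\infty(I;Y)}=:M$ is in hand, the claimed continuity of the embedding is automatic, since the natural norm on $BC_w(I;Y)$ is exactly $\sup_{t\in I}\|f(t)\|_Y$.

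First I would fix $t_0\in I$ and establish $f(t_0)\in Y$. Since $f\in L^\infty(I;Y)$ there is a null set $N$ with $\|f(t)\|_Y\le M$ for $t\notin N$, and I would choose $t_n\to t_0$ avoiding $N$. Then $f(t_n)\to f(t_0)$ in $X$ by $BUC$-regularity, while $(f(t_n))_n$ stays bounded in $Y$. The key move is to pass to a subsequence along which $f(t_n)\rightharpoonup\eta$ weakly in $Y$ for some $\eta\in Y$ with $\|\eta\|_Y\le M$. Testing against any $h\in X'\subseteq Y'$ and using that the two duality pairings coincide on $Y$, I would obtain
\[
  \weight{h,\eta}_{X',X}=\lim_{n\to\infty}\weight{h,f(t_n)}_{X',X}=\weight{h,f(t_0)}_{X',X},
\]
the first equality from weak convergence in $Y$, the second from convergence in $X$. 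Since $X'$ separates the points of $X$, this yields $\eta=f(t_0)$, so that $f(t_0)=\eta\in Y$ with $\|f(t_0)\|_Y\le M$; as $t_0$ was arbitrary, the uniform bound follows.

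Next I would upgrade this to weak continuity. Fixing $g\in Y'$ and $t_0\in I$ and given $\e>0$, I would use the density of $X'$ in $Y'$ to choose $h\in X'$ with $2M\|g-h\|_{Y'}<\e/2$, and then split
\[
  |\weight{g,f(t)-f(t_0)}_{Y',Y}|\le \|g-h\|_{Y'}\big(\|f(t)\|_Y+\|f(t_0)\|_Y\big)+|\weight{h,f(t)-f(t_0)}_{X',X}|.
\]
The first summand is at most $2M\|g-h\|_{Y'}<\e/2$ by the uniform bound from the previous step, and the second tends to $0$ as $t\to t_0$ because $f$ is continuous into $X$; a standard $\e$-argument then gives continuity of $t\mapsto\weight{g,f(t)}_{Y',Y}$. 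Combined with the uniform bound, this shows $f\in BC_w(I;Y)$.

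I expect the genuine difficulty to be concentrated in the weak extraction of the second paragraph: from $f(t_n)\to f(t_0)$ in the \emph{larger} space $X$ together with a uniform $Y$-bound one must recover that the limit again belongs to $Y$. This is precisely where weak sequential compactness of bounded subsets of $Y$ is needed, which is what makes the reflexive spaces $Y$ arising in the applications the natural setting; the density of $X'$ in $Y'$, by contrast, plays its role only afterwards, to transport weak continuity from the dense set of test functions in $X'$ to all of $Y'$.
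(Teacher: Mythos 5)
Your two-step strategy --- first upgrading the almost-everywhere bound $\|f(t)\|_Y\le M$ to a bound at \emph{every} $t\in I$, then using the density of $X'$ in $Y'$ to convert norm continuity in $X$ into weak continuity in $Y$ --- is the standard route for this kind of lemma; note that the paper itself gives no proof and only cites Abels \cite{Abe09a}. Your second step (the $\e$-splitting) is correct and complete, and the identification $\eta=f(t_0)$ by testing against $X'$ is also fine. The genuine gap is in the extraction step: passing to a subsequence with $f(t_n)\rightharpoonup\eta$ weakly in $Y$ requires the closed balls of $Y$ to be weakly sequentially compact, which by the Eberlein--\v{S}mulian theorem is \emph{equivalent} to reflexivity of $Y$. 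Reflexivity is not among the hypotheses of the lemma, so what you have written proves a different (weaker) statement, namely the lemma with ``$Y$ reflexive'' added; acknowledging this dependence in your closing paragraph, as you do, does not remove it.

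Moreover, the gap cannot be closed by a smarter argument, because the lemma as literally stated is false for non-reflexive $Y$. Take $Y=c_0$ and $X=\{(a_n)_{n\ge 1}:\ \|a\|_X:=\sum_{n\ge 1}2^{-n}|a_n|<\infty\}$, so that $Y\hookrightarrow X$ densely and $X'=\{(b_n):\ \sup_n 2^n|b_n|<\infty\}$ embeds densely into $Y'=\ell^1$ (it contains all finitely supported sequences). With $u_k=(1,\dots,1,0,0,\dots)$ ($k$ ones), define $f\colon[0,1]\to X$ by linear interpolation between $u_k$ and $u_{k+1}$ on $[1-2^{-k},1-2^{-k-1}]$, $k\ge 0$, and $f(1):=(1,1,1,\dots)\in X$. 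Then $f\in BUC([0,1];X)$, and $f(t)\in c_0$ with $\|f(t)\|_{c_0}\le 1$ for all $t<1$, so $f\in L^\infty(0,1;Y)\cap BUC([0,1];X)$; but $f(1)\notin c_0$, and no $Y$-weakly continuous function can agree with $f$ almost everywhere (test against finitely supported $h\in X'$ and use the continuity of $t\mapsto\langle h,f(t)\rangle$ to see that its value at $t=1$ would have to be $(1,1,1,\dots)$). So an additional hypothesis such as reflexivity of $Y$, or weak-$*$ compactness of its balls, is genuinely needed. Since every application in this paper takes $Y=L^2_\sigma(\Omega)$ or $Y=H^1(\Omega)$, which are Hilbert spaces, your proof does cover everything the paper actually uses: the defect lies in the generality of the statement rather than in your argument, but as a proof of the lemma \emph{as stated} it has an unfillable gap.
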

\noindent
For a proof, see e.g. Abels \cite{Abe09a}.

\section{Existence of Weak Solutions}

In this section we prove an existence result for the Navier-Stokes/Cahn-Hilliard system from the 
introduction for a situation with degenerate mobility. Since in this case we will not have a control
of the gradient of the chemical potential, we reformulate the equations by introducing a flux 
$\bJ = -m(\varphi) \nabla \mu$ consisting of the product of the mobility and the gradient of the
chemical potential. In this way, the complete system is given by:
\begin{subequations}
\begin{align}
 \partial_t (\rho \mathbf{v}) + \operatorname{div} (&\rho \bv \otimes \bv) - \operatorname{div} (2 \eta(\varphi) D \bv)
   + \nabla p  \nonumber \\
  &  + \operatorname{div}(\bv \otimes \beta \bJ) 
    =  - \operatorname{div}(a(\varphi) \nabla \varphi \otimes \nabla \varphi) & \mbox{in } \, Q_T , \label{degequ1} \\
 \operatorname{div} \, \bv &= 0 & \mbox{in } \, Q_T , \label{degequ2} \\
 \partial_t \varphi + \bv \cdot \nabla \varphi &= -\operatorname{div} \bJ & \mbox{in } \, Q_T , \label{degequ3} \\
 \bJ &= -m(\varphi) \nabla \left( 
      \Psi'(\varphi) + a'(\varphi) \frac{|\nabla \varphi|^2}{2} \right.\nonumber \\
   & \hspace*{60pt}  - \operatorname{div}\left( a(\varphi) \nabla \varphi \right)
                           \bigg) & \mbox{in } \, Q_T , \label{degequ4} \\
 \bv|_{\partial \Omega} &= 0 & \mbox{on } \, S_T , \label{degequ5} \\
 \partial_n \varphi|_{\partial \Omega} &= (\bJ \cdot n)|_{\partial \Omega} = 0 & \mbox{on } \, S_T , \label{degequ6} \\
 \left(\bv , \varphi \right)|_{t=0} &= \left( \bv_0 , \varphi_0 \right) & \mbox{in } \, \Omega , \label{degequ7}
\end{align}
\end{subequations}
where we set 
$\beta = \frac{\tilde{\rho}_2 - \tilde{\rho}_1}{2}$ and $\bJ = -m(\varphi) \nabla \mu$ as indicated above. 
The constitutive relation between density and phase field is given by 
$\rho(\p) = \frac{1}{2}(\tilde{\rho}_1 + \tilde{\rho}_2) + \frac{1}{2} (\tilde{\rho}_2 - \tilde{\rho}_1) \p$
as derived in Abels, Garcke and Gr\"un \cite{AGG12}, where $\tilde{\rho}_i>0$ are the specific constant mass 
densities of the unmixed fluids and $\varphi$ is the difference of the volume fractions of the fluids. 
By introducing $\bJ$, we omitted 
the chemical potential $\mu$ in our equations and we search from now on for unknowns $(\bv,\varphi,\boldsymbol{J})$.
In the above formulation and in the following, we use the abbreviations for space-time cylinders
$Q_{(s,t)}=\Omega \times (s,t)$ and $Q_t = Q_{(0,t)}$ and analogously for the 
boundary $S_{(s,t)}=\partial \Omega \times (s,t)$ and $S_t = S_{(0,t)}$.
Equation \eqref{degequ5} is the no-slip boundary condition for viscous fluids, 
$(\bJ \cdot n)|_{\partial \Omega} = 0$ resulting from $\partial_n \mu |_{\partial \Omega} = 0$ means that there is no mass flux of the components 
through the boundary, and $\partial_n \p |_{\partial \Omega} = 0$ describes a contact angle of $\pi/2$ of the 
diffused interface and the boundary of the domain. 

\subsection{Assumptions and Existence Theorem for Weak Solutions}

In the following we summarize the assumptions needed to formulate the notion of 
a weak solution of \eqref{degequ1}-\eqref{degequ7} and an existence result.

\begin{assumption} \label{assumptions}
We assume that $\Omega \subset \R^d$, $d=2,3$, is a bounded domain with smooth boundary and 
additionally we impose the following conditions.
\begin{enumerate}
 \item We assume $a, \Psi \in C^1(\R)$, $\eta \in C^0(\R)$ and $0 < c_0 \leq a(s), \eta(s) \leq K$ for 
       given constants $c_0,K > 0$.
 \item For the mobility $m$ we assume that 
       \begin{align} \label{assumpdegmob}
         m(s) = \left\{ \begin{array}{cl}
                  1 - s^2 \,, & \mbox{if } \, |s| \leq 1 , \\
                  0 \,, & \mbox{else} .
                 \end{array}
        \right.
       \end{align}
\end{enumerate}
\end{assumption}
We remark that other mobilities which degenerate linearly at $s = \pm 1$ are possible. The choice 
\eqref{assumpdegmob} typically appears in applications, see Cahn and Taylor \cite{CT94} and 
Hilliard \cite{Hil70}. Other degeneracies can be handled as well but some would need additional
assumptions, see Elliott and Garcke \cite{EG96}. 

We reformulate the model suitably due to the positive coefficient $a(\varphi)$ in the free energy, so that
we can replace the two terms with $a(\varphi)$ in equation \eqref{degequ4} by a single one.
To this end, we introduce the function $A(s) \ZuWeis \int_0^s \sqrt{a(\tau)} \, d\tau$.
Then $A'(s) = \sqrt{a(s)}$ and
\begin{align*}
 - \sqrt{a(\p)} \, \Delta A(\p) = a'(\p) \, \frac{|\nabla \p|^2}{2} - \operatorname{div} \left(a(\p) \, \nabla \p \right)
\end{align*}
resulting from a straightforward calculation. By reparametrizing the potential $\Psi$ through
$\widetilde{\Psi}: \R \to \R$, $\widetilde{\Psi}(r) \hspace*{-1pt}:=\hspace*{-1pt} \Psi(A^{-1}(r))$ we see 
$\Psi'(s) \hspace*{-1pt }= \hspace*{-1pt}\sqrt{a(s)} \widetilde{\Psi}'(A(s))$ and therefore we can replace line \eqref{degequ4} 
with the following one:
\begin{align} \label{replacedegequ4}
 \bJ &= -m(\varphi) \nabla \left( 
      \sqrt{a(\varphi)} \left( \widetilde{\Psi}'(A(\varphi)) - \Delta A(\varphi) \right) 
                           \right).
\end{align}
We also rewrite the free energy with the help of $A$ to 
\begin{align*}
 E_{\mbox{\footnotesize free}}(\p) = \int_\Omega \left( \widetilde{\Psi}(A(\p)) + \frac{|\nabla A(\p)|^2}{2} \right) dx \,.
\end{align*}

\begin{remark}
 With the above notation and with the calculation
 \begin{align*}
  - &\operatorname{div} (a(\p) \nabla \p \otimes \nabla \p) \\
    &= - \operatorname{div} (a(\p) \nabla \p) \nabla \p - a(\p) \nabla \left( \frac{|\nabla \p|^2}{2} \right) \\
    &= - \operatorname{div} (a(\p) \nabla \p) \nabla \p + \nabla (a(\varphi)) \frac{|\nabla \p|^2}{2} 
            - \nabla \left( a(\varphi) \frac{|\nabla \p|^2}{2} \right) \\
    &= \left( - \operatorname{div} (a(\p) \nabla \p) + a'(\varphi) \frac{|\nabla \p|^2}{2} \right) \nabla \varphi 
            - \nabla \left( a(\varphi) \frac{|\nabla \p|^2}{2} \right) \\
    &= - \sqrt{a(\varphi)} \Delta A(\varphi) \nabla \varphi - \nabla \left( a(\varphi) \frac{|\nabla \p|^2}{2} \right)
 \end{align*}
 we rewrite line \eqref{degequ1} with a new pressure $g = p + a(\varphi) \frac{|\nabla \p|^2}{2}$ into:
 \begin{align} 
 \begin{split} \label{replacedegequ1}
  \partial_t (\rho \mathbf{v}) & + \operatorname{div} (\rho \bv \otimes \bv) - \operatorname{div} (2 \eta(\varphi) D \bv)
   + \nabla g + \operatorname{div}(\bv \otimes \beta \bJ) \\
   & = - \sqrt{a(\varphi)} \Delta A(\varphi) \nabla \varphi \,.
 \end{split}
 \end{align}
 We remark that in contrast to the formulation in \cite{ADG12} we do not use the equation 
 for the chemical potential here.
\end{remark}

Now we can define a weak solution of problem \eqref{degequ1}-\eqref{degequ7}. 
\begin{definition} \label{defweaksolution}
 Let $T \in (0,\infty)$, $\bv_0 \in L^2_\sigma(\Omega)$ and $\p_0 \in H^1(\Omega)$ with $|\varphi_0| \leq 1$ 
 almost everywhere in $\Omega$.
 If in addition Assumption \ref{assumptions} holds, we call the triple $(\bv,\p,\bJ)$ 
 with the properties
 \begin{align*}
  & \bv \in BC_w([0,T];L^2_\sigma(\Omega)) \cap L^2(0,T;H_0^1(\Omega)^d) \,, \\
  & \varphi \in BC_w([0,T];H^1(\Omega)) \cap L^2(0,T;H^2(\Omega)) \; \mbox{ with } \;
       |\varphi| \leq 1 \, \mbox{ a.e. in } \, Q_T \,, \\
  & \bJ \in L^2(0,T;L^2(\Omega)^d) \, \mbox{ and} \\
  & \left( \bv,\p \right)|_{t=0} = \left( \bv_0 , \p_0 \right)
 \end{align*}
 a weak solution of \eqref{degequ1}-\eqref{degequ7}
 if the following conditions are satisfied:
\begin{align}  
\begin{split} \label{weakline1} 
  - \left(\rho \bv , \partial_t \bpsi \right)_{Q_T} 
 &+ \left( \operatorname{div}(\rho \bv \otimes \bv) , \bpsi \right)_{Q_T}
 + \left(2 \eta(\p) D\bv , D\bpsi \right)_{Q_T}  \\
 & - \left( (\bv \otimes \beta \bJ) , \nabla \bpsi \right)_{Q_T} 
 = -\left( \sqrt{a(\varphi)} \Delta A(\varphi) \, \nabla \varphi , \bpsi \right)_{Q_T} 
\end{split}
\end{align}
for all $\bpsi \in \left[C_0^\infty(\Omega \times (0,T))\right]^d$ with $\operatorname{div} \bpsi = 0$,
\begin{align}
  -\int_{Q_T} \varphi \, \partial_t \zeta \, dx \, dt 
  + \int_{Q_T} (\bv \cdot \nabla \varphi) \, \zeta \, dx \, dt
   = \int_{Q_T} \mathbf{J} \cdot \nabla \zeta \, dx \, dt \label{weakline2}
\end{align}
for all $\zeta \in C_0^\infty((0,T;C^1(\overline{\Omega}))$ and 
\begin{align}
\begin{split} \label{weakline3}
 \int_{Q_T} &\mathbf{J} \cdot \boldsymbol{\eta} \, dx \, dt \\
  &= -\int_{Q_T} \left( \sqrt{a(\varphi)} \left( \widetilde{\Psi}'(A(\varphi)) 
         - \Delta A(\varphi) \right) \right) \di (m(\varphi) \boldsymbol{\eta})\, dx \, dt 
\end{split}
\end{align}
for all $\boldsymbol{\eta} \in L^2(0,T;H^1(\Omega)^d) \cap L^\infty(Q_T)^d$ which fulfill $\boldsymbol{\eta} \cdot n = 0$ on $S_T$.
\end{definition}

\begin{remark}
 The identity \eqref{weakline3} is a weak version of 
 \begin{align*}
   \mathbf{J} = - m(\varphi) \, \nabla \left( \sqrt{a(\varphi)} \left( \widetilde{\Psi}'(A(\varphi)) - \Delta A(\varphi) \right) \right) \,.
 \end{align*}
\end{remark}

Our main result of this work is the following existence theorem for weak solutions on an arbitrary time interval
$[0,T]$, where $T > 0$. 

\begin{theorem} \label{theo:existenceweaksolution}
 Let Assumption \ref{assumptions} hold, $\bv_0 \in L^2_\sigma(\Omega)$ and $\varphi_0 \in H^1(\Omega)$
 with $|\varphi_0| \leq 1$ almost everywhere in $\Omega$. 
 Then there exists a weak solution $(\bv,\varphi,\bJ)$ of \eqref{degequ1}-\eqref{degequ7} 
 in the sense of Definition \ref{defweaksolution}. Moreover for some $\widehat{\bJ} \in L^2(Q_T)$ it holds that
 $\bJ = \sqrt{m(\varphi)} \widehat{\bJ}$ and 
 \begin{align} 
 \begin{split} \label{weakline5}
  E_{\mbox{\footnotesize tot}}(\p(t),\bv(t)) &+ \int_{Q_{(s,t)}} 2 \eta(\p) \, |D\bv|^2 \, dx \, d\tau 
        + \int_{Q_{(s,t)}} |\widehat{\bJ}|^2 \, dx \, d\tau \\
   &\leq E_{\mbox{\footnotesize tot}}(\p(s),\bv(s)) 
 \end{split}
 \end{align}
 for all $t \in [s,T)$ and almost all $s \in [0,T)$ including $s=0$.
 The total energy $E_{\mbox{\footnotesize tot}}$ is the sum of the kinetic and the free energy, cf. \eqref{totalenergy}.
 In particular, $\bJ = 0$ a.e. on the set $\{|\varphi| = 1\}$.
\end{theorem}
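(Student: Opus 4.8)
\emph{Proof proposal.} The plan is to obtain the weak solution as a limit of solutions to a non-degenerate approximation, following the strategy of Elliott and Garcke~\cite{EG96} for the decoupled Cahn--Hilliard equation combined with the existence theory of~\cite{ADG12} for the non-degenerate coupled system. First I would replace the degenerate mobility $m$ by a family of strictly positive mobilities $m_\e$ with $m_\e \geq \e > 0$ and $m_\e \to m$ suitably, and simultaneously replace the smooth reparametrized potential $\widetilde{\Psi}$ by a singular potential $\widetilde{\Psi}_\e$ whose derivative blows up at $\pm 1$, chosen so that the product $m_\e \widetilde{\Psi}_\e''$ stays controlled as in~\cite{EG96}. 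For each fixed $\e>0$ the regularized system has non-degenerate mobility, so the existence theory of~\cite{ADG12} yields an approximate weak solution $(\bv_\e,\varphi_\e,\bJ_\e)$ together with a well-defined chemical potential $\mu_\e$ and $\bJ_\e = -m_\e(\varphi_\e)\nabla\mu_\e$.

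The second step is to derive $\e$-uniform a priori bounds from the energy estimate of Lemma~\ref{lem:energyestimate}. Testing the regularized equations with $(\bv_\e,\mu_\e,\partial_t\varphi_\e)$ and exploiting the thermodynamic consistency produces an energy identity bounding $\bv_\e$ in $L^\infty(0,T;L^2_\sigma(\Omega))\cap L^2(0,T;H^1_0(\Omega)^d)$, $\varphi_\e$ in $L^\infty(0,T;H^1(\Omega))$, and $\widehat{\bJ}_\e := -\sqrt{m_\e(\varphi_\e)}\,\nabla\mu_\e$ in $L^2(Q_T)$, so that $\bJ_\e = \sqrt{m_\e(\varphi_\e)}\,\widehat{\bJ}_\e$. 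The crucial extra bound comes from the Elliott--Garcke testing procedure: a suitable test of the flux equation yields a uniform bound for $A(\varphi_\e)$ in $L^2(0,T;H^2(\Omega))$ and, through the singular potential, keeps $\varphi_\e$ inside $[-1,1]$. These estimates furnish, along a subsequence, weak-$*$ and weak limits $\bv$, $\varphi$, $\widehat{\bJ}$ and $\bJ$.

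The third and most delicate step is to upgrade the weak convergences to strong ones where the nonlinearities require it. Estimating $\partial_t\varphi_\e$ from~\eqref{degequ3} and $\partial_t(\rho(\varphi_\e)\bv_\e)$ from the momentum balance, I would apply the Aubin--Lions lemma~\eqref{eq:AubinLions} to get strong convergence of $\varphi_\e$ and of $\bv_\e$ in $L^2(Q_T)$, hence a.e. convergence. The hard part is passing to the limit in the capillary term $\sqrt{a(\varphi_\e)}\,\Delta A(\varphi_\e)\,\nabla\varphi_\e$: since $\Delta A(\varphi_\e)$ converges only weakly in $L^2$, one needs strong convergence of $\nabla\varphi_\e$ (equivalently of $\nabla A(\varphi_\e)$) in $L^2(Q_T)$. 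With a variable coefficient $a(\varphi)$ this does not follow from the standard compactness, and, as announced in the introduction, I would obtain it by a time-difference argument combined with Simon's compactness theorem~\cite{Sim87}, exploiting the $H^2$-bound on $A(\varphi_\e)$.

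Finally, with strong convergence in hand, I would pass to the limit $\e\to 0$ in the weak formulations to recover~\eqref{weakline1}--\eqref{weakline3}. The identification of the degenerate flux in~\eqref{weakline3} uses the factorization $\bJ_\e = \sqrt{m_\e(\varphi_\e)}\,\widehat{\bJ}_\e$, the a.e. convergence of $\varphi_\e$ to pass $\sqrt{m_\e(\varphi_\e)}\to\sqrt{m(\varphi)}$ strongly, and $\widehat{\bJ}_\e\rightharpoonup\widehat{\bJ}$ weakly, which gives $\bJ = \sqrt{m(\varphi)}\,\widehat{\bJ}$ as well as $|\varphi|\leq 1$ a.e. The energy inequality~\eqref{weakline5} then follows from weak lower semicontinuity of the total energy and of the dissipation terms, and the concluding assertion $\bJ = 0$ a.e. on $\{|\varphi|=1\}$ is immediate from $\bJ = \sqrt{m(\varphi)}\,\widehat{\bJ}$ together with $m(\pm 1)=0$. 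I expect the principal obstacle throughout to be the simultaneous control of the singular potential and the degenerating mobility in the limit, coupled with the strong gradient convergence forced by the variable-coefficient capillary term.
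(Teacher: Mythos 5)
Your proposal is correct and follows essentially the same route as the paper: regularization by a strictly positive mobility $m_\e$ together with a singular potential $\Psi + \e\Psi_{\mbox{\footnotesize ln}}$, existence for fixed $\e$ from \cite{ADG12}, the Elliott--Garcke testing procedure for the $L^2(0,T;H^2(\Omega))$ bound on $A(\varphi_\e)$, Aubin--Lions plus the time-difference/Simon argument for the strong convergences (in particular of $\nabla A(\varphi_\e)$), and then limit passage with the factorization $\bJ = \sqrt{m(\varphi)}\,\widehat{\bJ}$ and lower semicontinuity for the energy inequality. The only substantive detail you leave implicit is how the singular term $\e\Psi_{\mbox{\footnotesize ln}}'(\varphi_\e)$ is killed in the limit of the flux equation, which the paper handles via the extra estimate $\e^3\|\Psi_{\mbox{\footnotesize ln}}'(\varphi_\e)\|_{L^2(Q_T)}^2 \leq C$ and a splitting of $Q_T$ into $\{|\varphi_\e|\leq 1-\e\}$ and $\{|\varphi_\e|>1-\e\}$.
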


The proof of the theorem will be done in the next two subsections. But first of all we consider a special 
case which can then be excluded in the following proof. Due to $|\varphi_0| \leq 1$ a.e. in $\Omega$ we 
note that $\int_\Omega \hspace*{-14pt}{-}\hspace*{5pt} \varphi_0 \, dx \in [-1,1]$. In the situation
where $\int_\Omega \hspace*{-14pt}{-}\hspace*{5pt} \varphi_0 \, dx = 1$ we can then conclude that 
$\varphi_0 \equiv 1$ a.e. in $\Omega$ and can give the solution at once. In fact, here we set $\varphi \equiv 1$, 
$\bJ \equiv 0$ and let $\bv$ be the weak solution of the incompressible Navier-Stokes equations
without coupling to the Cahn-Hilliard equation, where $\rho$ and $\eta$ are constants. 
The situation where $\int_\Omega \hspace*{-14pt}{-}\hspace*{5pt} \varphi_0 \, dx = -1$ can 
be handled analogously.

With this observation we can assume in the following that 
\begin{align*} 
 \int_\Omega \hspace*{-13pt}{-}\hspace*{4pt} \varphi_0 \, dx \in (-1,1) \,,
\end{align*}
which will be needed for the reference to the previous existence result of the authors \cite{ADG12}
and for the proof of Lemma \ref{lem:energyestimate}, $(iii)$. 

\subsection{Approximation and Energy Estimates}

In the following we substitute problem \eqref{degequ1}-\eqref{degequ7} by an approximation
with positive mobility and a singular homogeneous free energy density, which can be solved 
with the result from the authors in \cite{ADG12}. For the weak solutions of the approximation
we then derive energy estimates.

First we approximate the degenerate mobility $m$ by a strictly positive $m_\varepsilon$ as
\begin{eqnarray*}
 m_\varepsilon(s) := \left\{ \begin{array}{cl}
                              m(-1+\varepsilon) & \mbox{for } \, s \leq -1 + \varepsilon \,, \\
                              m(s) & \mbox{for } \, |s| < 1-\varepsilon \,, \\
                              m(1-\varepsilon) & \mbox{for } \, s \geq 1 - \varepsilon \,.
                             \end{array}
                     \right.
\end{eqnarray*}
In addition we use a singular homogeneous free energy density $\Psi_\varepsilon$ given by 
\begin{align*}
 \Psi_\varepsilon(s) &:=  \Psi(s) + \varepsilon \Psi_{\mbox{\footnotesize ln}}(s) \,, \; \mbox{ where } \\
 \Psi_{\mbox{\footnotesize ln}}(s) &:= (1+s)\ln(1+s) + (1-s)\ln(1-s)\,.
\end{align*}
Then $\Psi_\varepsilon \in C([-1,1]) \cap C^2((-1,1))$ fulfills the assumptions on the homogeneous free energy as in 
Abels, Depner and Garcke \cite{ADG12}, which were given by
\begin{align*}
 \lim_{s \to \pm 1} \Psi_\e'(s) = \pm \infty \,, \quad 
 \Psi_\e''(s) \geq \kappa \; \mbox{ for some } \; \kappa \in \R \; \mbox{  and } \;
 \lim_{s \to \pm 1} \frac{\Psi_\e''(s)}{\Psi_\e'(s)} = + \infty .
\end{align*}
To deal with the positive coefficient $a(\varphi)$, we set similarly as above
$\widetilde{\Psi}_{\mbox{\footnotesize ln}}(r) := \Psi_{\mbox{\footnotesize ln}}(A^{-1}(r))$ and 
$\widetilde{\Psi}_\varepsilon(r) := \Psi_\varepsilon(A^{-1}(r))$ for $r \in [a,b] := A([-1,1])$.

Now we replace $m$ by $m_\e$ and $\Psi$ by $\Psi_\e$ and consider
the following approximate problem, this time for unknowns $(\bv,\varphi,\mu)$:
\begin{subequations}
\begin{align} 
  \partial_t (\rho \mathbf{v}) &+ \di \left(\rho \bv \otimes \bv \right) 
   - \di \left(2 \eta(\varphi) D\bv \right) 
   + \nabla g \nonumber \\  
  & + \di \left(\bv \otimes \beta m_\e(\varphi) \nabla \mu \right) 
  = - \sqrt{a(\varphi)} \Delta A(\varphi) \nabla \varphi & \mbox{in } \, Q_T , \label{approxequ1} \\
  \di \mathbf{v} &= 0 & \mbox{in } \, Q_T ,  \label{approxequ2} \\
  \partial_t \varphi + \mathbf{v} \cdot \nabla \varphi &= \di (m_\e(\varphi) \nabla \mu) & \mbox{in } \, Q_T , \label{approxequ3} \\
  \mu &= \sqrt{a(\varphi)} \left( \widetilde{\Psi}_\e'(A(\varphi)) - \Delta A(\varphi) \right)  & \mbox{in } \, Q_T , \label{approxequ5} \\
  \bv|_{\partial \Omega} &= 0 & \mbox{on } \, S_T , \label{approxequ6} \\
  \partial_n \varphi|_{\partial \Omega} &= \partial_n \mu|_{\partial \Omega} = 0 & \mbox{on } \, S_T , \label{approxequ7} \\
  \left(\bv , \varphi \right)|_{t=0} &= \left( \bv_0 , \varphi_0 \right) & \mbox{in } \, \Omega . \label{approxequ8}
\end{align}
\end{subequations}
From \cite{ADG12} we get the existence of a weak solution $(\bv_\e,\varphi_\e,\mu_\e)$ with the properties
\begin{align*}
  & \bv_\e \in BC_w([0,T];L^2_\sigma(\Omega)) \cap L^2(0,T;H_0^1(\Omega)^d) \,, \\
  & \p_\e \in BC_w([0,T];H^1(\Omega)) \cap L^2(0,T;H^2(\Omega)) \,, \; \
        \Psi_\e'(\p_\e) \in L^2(0,T;L^2(\Omega)) \,, \\
  & \mu_\e \in L^2(0,T;H^1(\Omega)) \, \mbox{ and} \\
  & \left.\left( \bv_\e,\p_\e \right)\right|_{t=0} = \left( \bv_0 , \p_0 \right)
\end{align*}
in the following sense:
 \begin{align}  
 \begin{split} \label{approxweak1} 
   - \left(\rho_\e \bv_\e , \partial_t \bpsi \right)_{Q_T} 
  &+ \left( \operatorname{div}(\rho_\e \bv_\e \otimes \bv_\e) , \bpsi \right)_{Q_T}
  + \left(2 \eta(\p_\e) D\bv_\e , D\bpsi \right)_{Q_T} \\
  & - \left( (\bv_\e \otimes \beta m_\e(\varphi_\e) \nabla \mu_\e ) , \nabla \bpsi \right)_{Q_T} 
  = \left( \mu_\e \nabla \varphi_\e , \bpsi \right)_{Q_T} 
 \end{split}
 \end{align}
 for all $\bpsi \in \left[C_0^\infty(\Omega \times (0,T))\right]^d$ with $\operatorname{div} \bpsi = 0$,
 \begin{align} 
  - \left(\p_\e , \partial_t \zeta \right)_{Q_T} 
  + \left( \bv_\e \cdot \nabla \p_\e , \zeta \right)_{Q_T}
  &= - \left(m_\e(\p_\e) \nabla \mu_\e , \nabla \zeta \right)_{Q_T}  \label{approxweak2} 
 \end{align}
 for all $\zeta \in C_0^\infty((0,T);C^1(\overline{\Omega}))$ and
 \begin{align}
   \mu_\e = \sqrt{a(\varphi_\e)} \left( \widetilde{\Psi}_\e'(A(\varphi_\e)) - \Delta A(\varphi_\e) \right)
     \; \mbox{ almost everywhere in } \, Q_T . \label{approxweak3}
 \end{align}
 Moreover,
 \begin{align} 
 \begin{split} \label{approxweak4}
  E_{\mbox{\footnotesize tot}}(\p_\e(t),\bv_\e(t)) &+ \int_{Q_{(s,t)}} 2 \eta(\p_\e) \, |D\bv_\e|^2 \, dx \,d\tau \\
        &+ \int_{Q_{(s,t)}} m_\e(\p_\e) |\nabla \mu_\e|^2 \, dx \, d\tau 
   \,\leq\, E_{\mbox{\footnotesize tot}}(\p_\e(s),\bv_\e(s)) 
 \end{split}
 \end{align}
 for all $t \in [s,T)$ and almost all $s \in [0,T)$ has to hold (including $s=0$).

Herein $\rho_\e$ is given as 
$\rho_\e = \frac{1}{2}(\tilde{\rho}_1 + \tilde{\rho}_2) + \frac{1}{2} (\tilde{\rho}_2 - \tilde{\rho}_1) \p_\e$.
Note that due to the singular homogeneous potential $\Psi_\e$ we have 
$|\varphi_\e| < 1$ almost everywhere.

\begin{remark} \label{rem:termsagree}
 Note that equation \eqref{approxweak1} can be rewritten with the help of the identity
 \begin{align*}
  \left( \mu_\e \nabla \varphi_\e , \bpsi \right)_{Q_T} 
    &= - \left( \sqrt{a(\varphi_\e)} \Delta A(\varphi_\e) \nabla \varphi_\e , \bpsi \right)_{Q_T}.
 \end{align*}
 This can be seen by testing \eqref{approxweak3} with $\nabla \varphi_\e \cdot \bpsi$ and noting
 that $\bpsi$ is divergence free.
\end{remark}

For the weak solution $(\bv_\e,\varphi_\e,\mu_\e)$ we get the following energy estimates:

\begin{lemma} \label{lem:energyestimate}
 For a weak solution $(\bv_\e,\varphi_\e,\mu_\e)$ of problem \eqref{approxequ1}-\eqref{approxequ8} we have the following 
 energy estimates:
 \begin{align*}
  (i) & \quad \sup_{0 \leq t \leq T} \int_\Omega \left( \rho_\e(t) \frac{\;|\bv_\e(t)|^2}{2} 
                           + \frac{1}{2} |\nabla \varphi_\e(t)|^2 + \Psi_\e(\varphi_\e(t)) \right) dx \\
          & \quad \; + \int_{Q_T} 2 \eta(\varphi_\e) |D \bv_\e|^2 \, dx \, dt 
                     + \int_{Q_T} m_\e(\varphi_\e) |\nabla \mu_\e|^2 \, dx \, dt
           \,\leq\, C \,, \\
  (ii) & \quad \sup_{0 \leq t \leq T} \int_\Omega \hspace*{-2pt} G_\e(\varphi_\e(t)) \, dx 
           + \int_{Q_T} |\Delta A(\varphi_\e)|^2  dx \, dt \,\leq\, C \,, \\
  (iii) & \quad \e^3 \int_{Q_T} |\Psi_{\mbox{\footnotesize ln}}'(\varphi_\e)|^2 \, dx \, dt \,\leq\, C \,, \\
  (iv) & \quad \int_{Q_T} |\widehat{\bJ}_\e|^2 \, dx \, dt \,\leq\, C \,, 
           \; \mbox{ where } \, \widehat{\bJ}_\e = - \sqrt{m_\e(\varphi_\e)} \,\nabla \mu_\e \,.
 \end{align*}
 Here $G_\e$ is a non-negative function defined by $G_\e(0) = G_\e'(0) = 0$ and 
 $G_\e''(s) = \frac{1}{m_\e(s)} \sqrt{a(s)}$ for $s \in [-1,1]$.
\end{lemma}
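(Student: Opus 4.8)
My plan is to establish the four bounds in the order (i), (iv), (ii), (iii), since each later estimate feeds on the earlier ones. Estimate (i) is essentially a restatement of the energy inequality \eqref{approxweak4} at $s=0$. Writing the free energy through $A$ as $E_{\text{free}}(\varphi_\e)=\int_\Omega(\widetilde{\Psi}_\e(A(\varphi_\e))+\tfrac12|\nabla A(\varphi_\e)|^2)\,dx$ and using $\widetilde{\Psi}_\e(A(\varphi_\e))=\Psi_\e(\varphi_\e)$ together with $|\nabla A(\varphi_\e)|^2=a(\varphi_\e)|\nabla\varphi_\e|^2$, the coercivity $c_0\le a\le K$ lets me pass between the weighted and unweighted Dirichlet energies. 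Since $\Psi_\e=\Psi+\e\Psi_{\mathrm{ln}}\ge -C$ is bounded below ($\Psi$ bounded on $[-1,1]$, $\Psi_{\mathrm{ln}}\ge0$), and since $E_{\text{tot}}(\varphi_0,\bv_0)$ is finite and uniformly bounded for $\e\le1$ (as $\rho$ is bounded, $\bv_0\in L^2_\sigma$, $\varphi_0\in H^1$, and both $\Psi,\Psi_{\mathrm{ln}}$ are bounded on $[-1,1]$), (i) follows. Estimate (iv) is then immediate, since $|\widehat{\bJ}_\e|^2=m_\e(\varphi_\e)|\nabla\mu_\e|^2$ is exactly the last dissipation term controlled in (i).

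The core is (ii), for which I adapt the entropy testing of Elliott and Garcke. The idea is to use $\zeta=G_\e'(\varphi_\e)$ in \eqref{approxweak2}; the choice $G_\e''=\sqrt{a}/m_\e$ is made precisely so the mobility cancels. The convective term drops out because $\int_\Omega\bv_\e\cdot\nabla(G_\e(\varphi_\e))=0$ for the divergence-free, no-slip field $\bv_\e$, so that, using $\sqrt{a}\,\nabla\varphi_\e=\nabla A(\varphi_\e)$,
\begin{align*}
 \frac{d}{dt}\int_\Omega G_\e(\varphi_\e)\,dx = -\int_\Omega m_\e(\varphi_\e)\nabla\mu_\e\cdot G_\e''(\varphi_\e)\nabla\varphi_\e\,dx = -\int_\Omega\nabla\mu_\e\cdot\nabla A(\varphi_\e)\,dx.
\end{align*}
Integrating by parts (boundary terms vanish since $\partial_n A(\varphi_\e)=0$) and inserting $\mu_\e=\Psi_\e'(\varphi_\e)-\sqrt{a}\,\Delta A(\varphi_\e)$ gives
\begin{align*}
 \frac{d}{dt}\int_\Omega G_\e(\varphi_\e)\,dx + \int_\Omega\sqrt{a}\,|\Delta A(\varphi_\e)|^2\,dx = \int_\Omega\Psi_\e'(\varphi_\e)\,\Delta A(\varphi_\e)\,dx = -\int_\Omega\Psi_\e''(\varphi_\e)\,\sqrt{a}\,|\nabla\varphi_\e|^2\,dx.
\end{align*}
The bound $\Psi_\e''\ge\kappa$ controls the right-hand side by $|\kappa|\sqrt{K}\int_\Omega|\nabla\varphi_\e|^2\le C$ via (i); integrating in time and using $\sqrt{a}\ge\sqrt{c_0}$ yields (ii), provided $\int_\Omega G_\e(\varphi_0)\,dx$ is bounded uniformly in $\e$. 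This last point follows from the pointwise comparison $G_\e\le G$, where $G''=\sqrt{a}/(1-s^2)$ is integrable up to $s=\pm1$, so $G$ is bounded on $[-1,1]$.

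For (iii) I start from $\e\Psi_{\mathrm{ln}}'(\varphi_\e)=\mu_\e-\Psi'(\varphi_\e)+\sqrt{a}\,\Delta A(\varphi_\e)$, whence $\e^3|\Psi_{\mathrm{ln}}'(\varphi_\e)|^2\le 3\e(|\mu_\e|^2+|\Psi'(\varphi_\e)|^2+a|\Delta A(\varphi_\e)|^2)$. The terms with $\Psi'$ and $\Delta A$ are harmless by boundedness of $\Psi'$ and by (ii); the real task is $\e\int_{Q_T}|\mu_\e|^2\le C$. Splitting $\mu_\e$ into spatial mean and mean-free part, Poincar\'e's inequality with $m_\e\ge\e(2-\e)\ge\e$ gives $\e\int_{Q_T}|\mu_\e-\overline{\mu_\e}|^2\le C\e\int_{Q_T}|\nabla\mu_\e|^2\le C\int_{Q_T}m_\e|\nabla\mu_\e|^2\le C$. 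The mean $\overline{\mu_\e}$ is delicate: since the spatial mean of $\varphi_\e$ is conserved and equals $\overline{\varphi_0}\in(-1,1)$ (here the excluded degenerate case is essential), I test the identity for $\mu_\e$ with $\varphi_\e-\overline{\varphi_0}$ and use monotonicity of $\Psi_{\mathrm{ln}}'$ to obtain the $L^1$-control $\e\int_\Omega|\Psi_{\mathrm{ln}}'(\varphi_\e)|\le C(1+\|\nabla\mu_\e\|_2+\|\Delta A(\varphi_\e)\|_2)$, which bounds $|\overline{\mu_\e}|$ by the same quantity; squaring, multiplying by $\e$ and integrating in time then closes the estimate.

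The main obstacle is the rigorous justification of (ii): the test function $G_\e'(\varphi_\e)$ is only as regular as $\varphi_\e$ permits, so the chain rule $\tfrac{d}{dt}\int_\Omega G_\e(\varphi_\e)=\weight{\partial_t\varphi_\e,G_\e'(\varphi_\e)}$ and the two integrations by parts must be carried out within the available regularity (approximating $G_\e'(\varphi_\e)$ by admissible test functions), and all constants must be tracked to stay uniform in $\e$, which is exactly what the comparison $G_\e\le G$ secures. The secondary difficulty is the control of $\overline{\mu_\e}$ in (iii), which genuinely relies on the conserved mean lying strictly inside $(-1,1)$.
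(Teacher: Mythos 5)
Your overall architecture --- (i) read off from the energy inequality \eqref{approxweak4}, (iv) as an immediate consequence of (i), (ii) by Elliott--Garcke entropy testing with $\zeta=G_\varepsilon'(\varphi_\varepsilon)$ chosen so that $\operatorname{div}(m_\varepsilon(\varphi_\varepsilon)\nabla\zeta)=\Delta A(\varphi_\varepsilon)$, and (iii) by splitting $\mu_\varepsilon$ into mean and mean-free part, controlling the latter via Poincar\'e and $m_\varepsilon\geq\varepsilon$, and the former through an $L^1$ bound on $\varepsilon\Psi_{\mathrm{ln}}'(\varphi_\varepsilon)$ obtained by testing with $\varphi_\varepsilon-\overline{\varphi_0}$, mean conservation and monotonicity of $\Psi_{\mathrm{ln}}'$ --- is exactly the paper's. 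Parts (i), (iii) and (iv) are in order.

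The gap is in (ii), at the asserted identity
\begin{equation*}
\int_{Q_t}\Psi_\varepsilon'(\varphi_\varepsilon)\,\Delta A(\varphi_\varepsilon)\,dx\,d\tau
=-\int_{Q_t}\Psi_\varepsilon''(\varphi_\varepsilon)\,\sqrt{a(\varphi_\varepsilon)}\,|\nabla\varphi_\varepsilon|^2\,dx\,d\tau .
\end{equation*}
For the smooth part $\Psi'$ this integration by parts is legitimate, but for the singular part $\varepsilon\Psi_{\mathrm{ln}}'(\varphi_\varepsilon)$ it requires $\Psi_{\mathrm{ln}}''(\varphi_\varepsilon)|\nabla\varphi_\varepsilon|^2\in L^1(Q_T)$, which is not among the known properties of the weak solution from \cite{ADG12}: one only has $|\varphi_\varepsilon|<1$ a.e., $\varphi_\varepsilon\in L^2(0,T;H^2(\Omega))$ and $\Psi_\varepsilon'(\varphi_\varepsilon)\in L^2(Q_T)$, and the region where $\varphi_\varepsilon$ approaches $\pm1$ is not excluded. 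Your proposed remedy --- approximating $G_\varepsilon'(\varphi_\varepsilon)$ by admissible test functions --- addresses the duality pairing with $\partial_t\varphi_\varepsilon$, not this point. The paper circumvents it by proving only the one-sided bound $\int_{Q_t}\Psi_{\mathrm{ln}}'(\varphi_\varepsilon)\Delta A(\varphi_\varepsilon)\,dx\,d\tau\leq0$: rescale $\varphi_\varepsilon^\alpha=\alpha\varphi_\varepsilon$ with $0<\alpha<1$, so that $|\varphi_\varepsilon^\alpha|\leq\alpha<1$ keeps the argument away from the singularity; integrate by parts there and use the sign $\Psi_{\mathrm{ln}}''\geq0$ to get a nonpositive quantity; then pass to the limit $\alpha\nearrow1$ on the left-hand side only, by dominated convergence with majorant $|\Psi_{\mathrm{ln}}'(\varphi_\varepsilon)|\in L^2(Q_T)$ and $\Delta A(\varphi_\varepsilon^\alpha)\to\Delta A(\varphi_\varepsilon)$ in $L^2$. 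Since only an upper bound on $\int\Psi_\varepsilon'\Delta A$ is needed, this suffices, and your appeal to $\Psi_\varepsilon''\geq\kappa$ becomes superfluous (the smooth part is handled by boundedness of $\Psi''$ on $[-1,1]$). Without this rescaling device, or a proof that $\Psi_{\mathrm{ln}}''(\varphi_\varepsilon)|\nabla\varphi_\varepsilon|^2\in L^1(Q_T)$ (which would require returning to the time-discrete construction in \cite{ADG12}), your step (ii) is not justified. A separate small slip: $G''(s)=\sqrt{a(s)}/(1-s^2)$ is \emph{not} integrable up to $s=\pm1$; what is true is that $G'$ has only a logarithmic singularity, hence is integrable, so that $G$ extends continuously to $[-1,1]$ --- the conclusion $\int_\Omega G_\varepsilon(\varphi_0)\,dx\leq C$ you need still holds, but for that reason.
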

\begin{proof}
ad $(i)$: This follows directly from the estimate \eqref{approxweak4} derived in the work of Abels, Depner and Garcke \cite{ADG12}. 
We just note that for the estimate of $\nabla \varphi_\e$ we use $\nabla A(\varphi_\e) = \sqrt{a(\varphi_\e)} \nabla \varphi_\e$
and the fact that $a$ is bounded from below by a positive constant due to Assumption \ref{assumptions}.

ad $(ii)$: From line \eqref{approxweak2} we get that $\partial_t \varphi_\e \in L^2(0,T;\left( H^1(\Omega) \right)')$,
since $\nabla \mu_\e \in L^2(Q_T)$ and $\bv \cdot \nabla \varphi = \operatorname{div} (\bv \,\varphi)$ with
$\bv \, \varphi \in L^2(Q_T)$.
Then we derive for a function $\zeta \in L^2(0,T;H^2(\Omega))$
 the weak formulation
 \begin{align}
 \begin{split} \label{lab1}
  \int_0^t & \langle \partial_t \varphi_\e , \zeta \rangle \, d\tau 
    + \int_{Q_t} \bv_\e \cdot \nabla \varphi_\e \zeta \, dx \, d\tau \\
    &= - \int_{Q_t} m_\e(\varphi_\e) \nabla \mu_\e \cdot \nabla \zeta \, dx \, d\tau  \\
    &= \int_{Q_t} \sqrt{a(\varphi_\e)} \left( \widetilde{\Psi}_\e'(A(\varphi_\e)) 
                                                   - \Delta A(\varphi_\e) \right) \di (m_\e(\varphi_\e) \nabla \zeta) \, dx \, d\tau ,
 \end{split}
 \end{align}
where we additionally used \eqref{approxweak3} to express $\mu_\e$. 
Now we set as test function $\zeta = G_\e'(\varphi_\e)$, where $G_\e$ is defined by $G_\e(0) = G_\e'(0) = 0$ and 
$G_\e''(s) = \frac{1}{m_\e(s)} A'(s)$ for $s \in [-1,1]$. Note that $G_\e$ is a non-negative function, which 
can be seen from the representation
$G_\e(s) = \int_0^s \left( \int_0^r \frac{1}{m_\e(\tau)} A'(\tau) \, d\tau \right) dr$. 
With $\zeta = G_\e'(\varphi_\e)$ it holds that
\begin{align*}
 \nabla \zeta &= G''_\e(\varphi_\e) \nabla \varphi_\e  
              = \frac{1}{m_\e(\varphi_\e)} \nabla \left( A(\varphi_\e) \right) \; \mbox{ and therefore } \; \\
 \di \left(m_\e(\varphi_\e) \nabla \zeta \right) &= \Delta \left( A(\varphi_\e) \right) .
\end{align*}
Hence we derive 
\begin{align}
\begin{split} \label{lab2}
 \int_0^t &\langle \partial_t \varphi_\e , G'_\e(\varphi_\e) \rangle d\tau 
    + \int_{Q_t} \bv_\e \cdot \nabla \varphi_\e G'_\e(\varphi_\e) \, dx \, d\tau \\
    &= \int_{Q_t} \sqrt{a(\varphi_\e)}\left( \widetilde{\Psi}_\e'(A(\varphi_\e)) - \Delta A(\varphi_\e) \right) \Delta A(\varphi_\e) \, dx \, d\tau \\
    &= \int_{Q_t} \Psi_\e'(\varphi_\e) \Delta A(\varphi_\e) \, dx \, d\tau
       - \int_{Q_t} \sqrt{a(\varphi_\e)} \, |\Delta A(\varphi_\e)|^2 \, dx \, d\tau \,.
\end{split}
\end{align}
With this notation we deduce
\begin{align*}
 \int_0^t \langle \partial_t \varphi_\e , G'_\e(\varphi_\e) \rangle dt 
   &= \int_\Omega G_\e(\varphi(t)) \, dx - \int_\Omega G_\e(\varphi_0) \, dx \quad \mbox{ and } \\
 \int_{Q_t} \bv_\e \cdot \nabla \varphi_\e G'_\e(\varphi_\e) \, dx \, dt  
   &= \int_{Q_t} \bv_\e \cdot \nabla \left(G_\e(\varphi_\e)\right) dx \, dt \\
   &= -\int_{Q_t} \di \bv_\e \, G_\e(\varphi_\e) \, dx \, dt = 0 \,.
\end{align*}
{\allowdisplaybreaks
For the first term on the right side of \eqref{lab2} we observe
\begin{align*}
 \int_{Q_t} &\Psi_\e'(\varphi_\e) \Delta A(\varphi_\e) \, dx \, d\tau \\
  &= \int_{Q_t} \Psi'(\varphi_\e) \Delta A(\varphi_\e) \, dx \, d\tau
     + \e \int_{Q_t} \Psi_{\mbox{\footnotesize ln}}'(\varphi_\e) \Delta A(\varphi_\e) \, dx \, d\tau \\
  & \leq  -\int_{Q_t} \Psi''(\varphi_\e) \nabla \varphi_\e \cdot \nabla A(\varphi_\e) \, dx \, dt \\
  &= -\int_{Q_t} \Psi''(\varphi_\e) \sqrt{a(\varphi_\e)} |\nabla \varphi_\e|^2 \, dx \, dt .
\end{align*}}
Herein the estimate
\begin{align*}
 \int_{Q_t} \Psi_{\mbox{\footnotesize ln}}'(\varphi_\e) \Delta A(\varphi_\e) \, dx \, d\tau &\leq 0 
\end{align*}
for the logarithmic 
part of the homogeneous free energy density is derived as follows. With an 
approximation of $\varphi_\e$ by $\varphi_\e^\alpha = \alpha \varphi_\e$ for $0<\alpha<1$ we have 
that $|\varphi_\e^\alpha| < \alpha < 1$ and therefore
\begin{align*}
 \int_{Q_t} \Psi_{\mbox{\footnotesize ln}}'(\varphi_\e^\alpha) \Delta A(\varphi_\e^\alpha) \, dx \, d\tau
&= -\int_{Q_t} \Psi_{\mbox{\footnotesize ln}}''(\varphi_\e^\alpha) \nabla \varphi_\e^\alpha \cdot \nabla A(\varphi_\e^\alpha) \, dx \, d\tau
\leq 0 \,,
\end{align*}
where we used integration by parts. 
To pass to the limit for $\alpha \nearrow 1$ in the left side we observe that 
$\varphi_\e^\alpha \to \varphi_\e$ in $L^2(0,T;H^2(\Omega))$. Hence together with the bound
$|\Psi_{\mbox{\footnotesize ln}}'(\varphi_\e^\alpha)| \leq |\Psi_{\mbox{\footnotesize ln}}'(\varphi_\e)|$ 
we can use Lebesgue's dominated convergence theorem to conclude
\begin{align*}
 \int_{Q_t} \Psi_{\mbox{\footnotesize ln}}'(\varphi_\e^\alpha) \Delta A(\varphi_\e^\alpha) \, dx \, d\tau 
  \longrightarrow \int_{Q_t} \Psi_{\mbox{\footnotesize ln}}'(\varphi_\e) \Delta A(\varphi_\e) \, dx \, d\tau 
 \; \mbox{ for } \; \alpha \nearrow 1.
\end{align*}
With the bound from below $a(s) \geq c_0 > 0$ from Assumption \ref{assumptions} we derived therefore
\begin{align*}
 \int_\Omega &G_\e(\varphi(t)) \, dx 
  + \int_{Q_t} |\Delta A(\varphi_\e)|^2 \, dx \, d\tau \\
 &\leq C \left( \int_\Omega G_\e(\varphi_0) \, dx 
   + \int_{Q_t} \Psi''(\varphi_\e) \sqrt{a(\varphi_\e)} \, |\nabla \varphi_\e|^2 \, dx \, d\tau \right) .
\end{align*}
Now we use  $m_\e(\tau) \geq m(\tau)$ to observe the inequality
\begin{align*}
 G_\e(s) &= \int_0^s \bigg( \int_0^r \frac{1}{m_\e(\tau)} \underbrace{A'(\tau)}_{=\sqrt{a(\tau)}} \, d\tau \bigg) dr \\
         &\leq \int_0^s \left( \int_0^r \frac{1}{m(\tau)} \sqrt{a(\tau)} \, d\tau \right) dr
         =: G(s) \; \mbox{ for s $\in (-1,1)$}.
\end{align*}
Due to the special choice of the degenerate mobility $m$ in \eqref{assumpdegmob} 
we conclude that $G$ can be extended continuously to the closed interval $[-1,1]$ and that therefore 
the integral $\int_\Omega G(\varphi_0) \, dx$ and in particular the integral $\int_\Omega G_\e(\varphi_0) \, dx$ 
is bounded.

Moreover, since $\Psi''(s)$ is bounded in $|s| \leq 1$ 
and since we estimated $\int_\Omega |\nabla \varphi_\e(t)|^2 \, dx$ in $(i)$, we proved $(ii)$. 

ad $(iii)$: To show this estimate we will argue similarly as in the time-discrete situation of Lemma 4.2 
in Abels, Depner and Garcke \cite{ADG12}. 
We multiply equation \eqref{approxweak3} with $P_0 \varphi_\e$, integrate 
over $\Omega$ and get almost everywhere in $t$ the identity
\begin{align} \label{P0ident}
\begin{split}
 \int_{\Omega} \mu_\e P_0 \varphi_\e \, dx &= \int_{\Omega} \Psi'(\varphi_\e) P_0 \varphi_\e \, dx 
   + \e \int_{\Omega} \Psi_{\mbox{\footnotesize ln}}'(\varphi_\e) P_0 \varphi_\e \, dx \\
   & \quad - \int_{\Omega} \sqrt{a(\varphi_\e)} \Delta A(\varphi_\e) P_0 \varphi_\e \, dx .
\end{split}
\end{align}
By using in identity \eqref{approxweak2} a test function which depends only on time $t$ and not on $x \in \Omega$, 
we derive the fact that $(\varphi_\e)_\Omega = (\varphi_0)_\Omega$ and by assumption this number lies in 
$(-1+\alpha, 1-\alpha)$ for a small $\alpha > 0$.
In addition with the property $\lim_{s \to \pm 1} \Psi_{\mbox{\footnotesize ln}}'(s) = \pm \infty$
we can show the inequality $\Psi_{\mbox{\footnotesize ln}}'(s)(s - (\varphi_0)_\Omega) 
\geq C_\alpha |\Psi_{\mbox{\footnotesize ln}}'(s)| - c_\alpha$ in three steps in the intervals
$[-1,-1+\tfrac{\alpha}{2}]$, $[-1+\tfrac{\alpha}{2}, 1-\tfrac{\alpha}{2}]$ and $[1-\tfrac{\alpha}{2},1]$ successively.
Altogether this leads to the following estimate:
\begin{align} \label{P0est}
 \e \int_{\Omega} |\Psi_{\mbox{\footnotesize ln}}'(\varphi_\e)| \, dx 
   &\leq  C \left( \e \int_{\Omega} \Psi_{\mbox{\footnotesize ln}}'(\varphi_\e) P_0 \varphi_\e \, dx + 1 \right) .
\end{align}
We observe the fact that $\int_{\Omega} \mu_\e P_0 \varphi_\e \, dx = \int_{\Omega} (P_0 \mu_\e ) \varphi_\e \, dx$ 
and due to integration by parts
\begin{align*}
  - \int_{\Omega} & \sqrt{a(\varphi_\e)} \Delta A(\varphi_\e) P_0 \varphi_\e \, dx \\
    &= \int_\Omega \sqrt{a(\varphi_\e)} \nabla A(\varphi_\e) \cdot \nabla \varphi_\e \, dx 
      + \int_\Omega \frac{1}{2} a(\varphi_\e)^{-\frac{1}{2}} \nabla \varphi_\e \cdot \nabla A(\varphi_\e) P_0 \varphi_\e \, dx \\
    &= \int_\Omega a(\varphi_\e) |\nabla \varphi_\e|^2 \, dx 
        + \int_\Omega \frac{1}{2} P_0 \varphi_\e |\nabla \varphi_\e|^2 \, dx \,.
\end{align*}
Combining estimate \eqref{P0est} with identity \eqref{P0ident} we are led to
\begin{align*}
 \e \int_{\Omega} |\Psi_{\mbox{\footnotesize ln}}'(\varphi_\e)| \, dx 
  &\leq C \left( \int_{\Omega} |(P_0 \mu_\e) \varphi_\e| \, dx + \int_{\Omega} |\Psi'(\varphi_\e) P_0 \varphi_\e |\, dx \right. \\
  & \quad  + \left. \int_{\Omega} |\sqrt{a(\varphi_\e)} \Delta A(\varphi_\e) P_0 \varphi_\e| \, dx + 1 \right) \\
  &\leq C \left( \|P_0 \mu_\e\|_{L^2(\Omega)} + \|\nabla \varphi_\e\|_{L^2(\Omega)} + 1 \right) \\
  & \leq C \left( \|\nabla \mu_\e \|_{L^2(\Omega)} + 1 \right) .
\end{align*}
In the last two lines we have used in particular the facts that $\varphi_\e$ is bounded between $-1$ and $1$, 
that $\Psi'$ is continuous,
the energy estimate from $(ii)$ for $\sup_{0 \leq t \leq T} \|\nabla \varphi_\e\|_{L^2(\Omega)}$ 
and the Poincar\'{e} inequality for functions with mean value zero. 

With the last inequality we can estimate the integral of $\mu_\e$ by simply integrating 
identity \eqref{approxweak3} over $\Omega$:
\begin{align*}
 \left| \int_{\Omega} \mu_\e \, dx \right| 
  &\leq \int_{\Omega} |\Psi'(\varphi_\e)| \,dx + \e \int_{\Omega} |\Psi_{\mbox{\footnotesize ln}}'(\varphi_\e)| \, dx
   + \left| \int_{\Omega} \sqrt{a(\varphi_\e)} \Delta A(\varphi_\e) \, dx \right| \\
  &\leq C \left( \|\nabla \mu_\e \|_{L^2(\Omega)} + 1 \right) ,
\end{align*}
where we used similarly as above integration by parts for the integral over $\sqrt{a(\varphi_\e)} \Delta A(\varphi_\e)$.
By the splitting of $\mu_\e$ into $\mu_\e = P_0 \mu_\e + (\mu_\e)_\Omega$ we arrive at
\begin{align*}
 \|\mu_\e\|_{L^2(\Omega)}^2 &\leq C \left( \|\nabla \mu_\e\|_{L^2(\Omega)}^2 + 1 \right) .
\end{align*}
Then, again from identity \eqref{approxweak3}, we derive
\begin{align*}
 \e^2 |\Psi_{\mbox{\footnotesize ln}}'(\varphi_\e)|^2 &\leq 
  C \left( |\mu_\e|^2 + |\Delta A(\varphi_\e)|^2 + 1 \right)
\end{align*}
and together with the last estimates and an additional integration over time $t$ this leads to
\begin{align*}
 \e^2 \|\Psi_{\mbox{\footnotesize ln}}'(\varphi_\e)\|_{L^2(Q_T)}^2 &\leq 
   C \left( \|\nabla \mu_\e \|^2_{L^2(Q_T)} + 1 \right) . 
\end{align*}
Note that we used the bound $\|\Delta A(\varphi_\e)\|_{L^2(Q_T)} \leq C$ from $(ii)$. 
Furthermore, due to the bounds in $(i)$, we see $\e \|\nabla \mu_\e\|^2_{L^2(Q_T)} \leq C$ since 
$m_\e(s) \geq \e$ for $|s| \leq 1$ and therefore we arrive at
\begin{align*}
 \e^3 \|\Psi_{\mbox{\footnotesize ln}}'(\varphi_\e)\|_{L^2(Q_T)}^2 &\leq C \,.
\end{align*}
ad $(iv)$: This follows directly from $(i)$. 
\end{proof}

\subsection{Passing to the limit in the Approximation}

In this subsection we use the energy estimates to get weak limits for the sequences
$(\bv_\e,\varphi_\e,\bJ_\e)$, where $\bJ_\e = \sqrt{m_\e(\varphi_\e)} \, \widehat{\bJ}_\e \left( = -m_\e(\varphi_\e) \nabla \mu_\e \right)$. 
With some subtle arguments we extend the 
weak convergences to strong ones, so that we are able to pass to the limit for 
$\e \to 0$ in the equations \eqref{approxweak1}-\eqref{approxweak3} to recover
the identities \eqref{weakline1}-\eqref{weakline3} in the definition of the weak
solution for the main problem \eqref{degequ1}-\eqref{degequ7}.

Using the energy estimates in Lemma \ref{lem:energyestimate}, we can pass to a subsequence to get
\begin{align*}
 \bv_\e \rightharpoonup \bv & \;\mbox{ in } \; L^2(0,T;H^1(\Omega)^d) , \\
 \varphi_\e \rightharpoonup \varphi & \; \mbox{ in } \; L^2(0,T;H^1(\Omega)) ,\\
 \widehat{\bJ}_\e \rightharpoonup \widehat{\bJ} & \; \mbox{ in } \; L^2(0,T;L^2(\Omega)^d)  \;\mbox{ and} \\
 \bJ_\e \rightharpoonup \bJ & \; \mbox{ in } \; L^2(0,T;L^2(\Omega)^d)
\end{align*}
for $\bv \in L^2(0,T;H^1(\Omega)^d) \cap L^\infty(0,T;L^2_\sigma(\Omega))$, 
$\varphi \in L^\infty(0,T;H^1(\Omega))$ and $\widehat{\bJ}, \bJ \in L^2(0,T;L^2(\Omega)^d)$.
Here and in the following all limits are meant to be for suitable subsequences $\e_k \to 0$ for 
$k \to \infty$. 

With the notation $\bJ_\e = -m_\e(\varphi_\e) \nabla \mu_\e$ the weak solution of problem \eqref{approxequ1}-\eqref{approxequ8}
fulfills the following equations:
\begin{align}  
\begin{split} \label{approxweakline1} 
  - \big(\rho_\e \bv_\e , &\partial_t \bpsi \big)_{Q_T} 
 + \left( \operatorname{div}(\rho_\e \bv_\e \otimes \bv_\e) , \bpsi \right)_{Q_T}
 + \left(2 \eta(\p_\e) D\bv_\e , D\bpsi \right)_{Q_T} \\
 & - \left( (\bv_\e \otimes \beta \bJ_\e) , \nabla \bpsi \right)_{Q_T} 
 = -\left( \sqrt{a(\varphi_\e)} \Delta A(\varphi_\e) \, \nabla \varphi_\e , \bpsi \right)_{Q_T} 
\end{split}
\end{align}
for all $\bpsi \in \left[C_0^\infty(\Omega \times (0,T))\right]^d$ with $\operatorname{div} \bpsi = 0$,
\begin{align}
  -\int_{Q_T} \varphi_\e \, \partial_t \zeta \, dx \, dt 
  + \int_{Q_T} (\bv_\e \cdot \nabla \varphi_\e) \, \zeta \, dx \, dt 
   = \int_{Q_T} \bJ_\e \cdot \nabla \zeta \, dx \, dt \label{approxweakline2}
\end{align}
for all $\zeta \in C_0^\infty((0,T;C^1(\overline{\Omega}))$ and 
\begin{align}
\begin{split} \label{approxweakline3}
 \int_{Q_T} & \bJ_\e \cdot \boldsymbol{\eta} \, dx \, dt \\ 
 &= -\int_{Q_T} \left( \Psi_\e'(\varphi_\e) - \sqrt{a(\varphi_\e)}\Delta A(\varphi_\e) \right) \di (m_\e(\varphi_\e) \boldsymbol{\eta}) \, dx \, dt
\end{split}
\end{align}
for all $\boldsymbol{\eta} \in L^2(0,T;H^1(\Omega)^d) \cap L^\infty(Q_T)^d$ with $\boldsymbol{\eta} \cdot n = 0$ on $S_T$. 
For the last line we used that for functions $\boldsymbol{\eta}$ with $\boldsymbol{\eta} \cdot n = 0$ on $S_T$ it holds
\begin{align*}
 \int_{Q_T} \bJ_\e \cdot \boldsymbol{\eta} \, dx \, dt
   &= \int_{Q_T} \nabla \mu_\e \cdot m_\e(\varphi_\e) \boldsymbol{\eta} \, dx \, dt
   = -\int_{Q_T} \mu_\e \di(m_\e(\varphi_\e) \boldsymbol{\eta}) \, dx \, dt \\
   &= -\int_{Q_T} \left(\Psi_\e'(\varphi_\e) - \sqrt{a(\varphi_\e)}\Delta A(\varphi_\e) \right) \di(m_\e(\varphi_\e) \boldsymbol{\eta}) \, dx \, dt \,.
\end{align*}
Now we want to pass to the limit $\e \to 0$ in the above equations to achieve finally the weak formulation \eqref{weakline1}-\eqref{weakline3}.

For the convergence in identity \eqref{approxweakline1} we first note that
\begin{align*}
 \partial_t \varphi_\e &\; \mbox{ is bounded in } \; L^2(0,T;\left(H^1(\Omega)\right)') \; \mbox{ and} \\
 \varphi_\e & \;\mbox{ is bounded in } \; L^\infty(0,T;H^1(\Omega)) \,.
\end{align*}
Therefore we can deduce from the Lemma of Aubins-Lions \eqref{eq:AubinLions} the 
strong convergence
\begin{align*}
 \varphi_\e \to \varphi \; \mbox{ in } \; L^2(0,T;L^2(\Omega))
\end{align*}
and $\varphi_\e \to \varphi$ pointwise almost everywhere in $Q_T$. 

From the bound of $\Delta A(\varphi_\e)$ in $L^2(Q_T)$ and from 
\begin{align*}
  \nabla A(\varphi_\e) \cdot n &= \sqrt{a(\varphi_\e)} \nabla \varphi_\e \cdot n = 0 \; \mbox{ on } \, S_T, 
\end{align*}
we 
get from elliptic regularity theory the bound
\begin{align*}
 \|A(\varphi_\e)\|_{L^2(0,T;H^2(\Omega))} & \leq C \,.
\end{align*}
This yields 
\begin{align*}
 A(\varphi_\e) \rightharpoonup g \; \mbox{ in } \; L^2(0,T;H^2(\Omega))
\end{align*} 
at first for some $g \in L^2(0,T;H^2(\Omega))$, but then, due to the weak 
convergence $\nabla \varphi_\e \rightharpoonup \nabla \varphi$ in $L^2(0,T;L^2(\Omega))$
and due to the pointwise almost everywhere convergence $a(\varphi_\e) \to a(\varphi)$ in $Q_T$
we can identify $g$ with $A(\varphi)$ to get
\begin{align*}
 A(\varphi_\e) \rightharpoonup A(\varphi) \; \mbox{ in } \; L^2(0,T;H^2(\Omega)) \,.
\end{align*}
The next step is to strengthen the convergence of $\nabla \varphi_\e$ in $L^2(Q_T)$. 
To this end, we remark that by definition $A$ is Lipschitz-continuous with
\begin{align*}
 |A(r) - A(s)| \leq \left| \int_s^r \sqrt{a(\tau)} \, d\tau \right| \leq C |r-s| \,.
\end{align*}
Furthermore from the bound of $\partial_t \varphi_\e$ in $L^2(0,T;\left(H^1(\Omega)\right)')$
we get with the notation $\varphi_\e(.+h)$ for a shift in time 
\begin{align*}
 \| \varphi_\e(.+h) - \varphi_\e \|_{L^2(0,T-h;\left(H^1(\Omega)\right)')} &\leq Ch \,,
\end{align*}
which leads to the estimate
\begin{align*}
  \lefteqn{\| A(\varphi_\e(.+h)) - A(\varphi_\e) \|_{L^2(0,T-h;\left(H^1(\Omega)\right)')}}  \\
  & \; \leq C \| \varphi_\e(.+h) - \varphi_\e \|_{L^2(0,T-h;\left(H^1(\Omega)\right)')} \\
  & \; \leq C h \longrightarrow  0 \quad \mbox{as } \; h \to 0 \,.
\end{align*}
Together with the bound of $A(\varphi_\e)$ in $L^2(0,T;H^2(\Omega))$ we can use
a theorem of Simon \cite[Th. 5]{Sim87} to conclude the strong convergence 
\begin{align*}
 A(\varphi_\e) \to A(\varphi) \; \mbox{ in } \; L^2(0,T;H^1(\Omega)) \,.
\end{align*}
From $\nabla A(\varphi_\e) = \sqrt{a(\varphi_\e)} \nabla \varphi_\e$ we get then
in particular the strong convergence 
\begin{align*}
 \nabla \varphi_\e \to \nabla \varphi \; \mbox{ in } \; L^2(0,T;L^2(\Omega)) \,.
\end{align*}
In addition we want to use an argument of Abels, Depner and Garcke from \cite[Sec. 5.1]{ADG12}
which shows that due to the a priori estimate in Lemma \ref{lem:energyestimate} and the structure
of equation \eqref{approxweakline1} we can deduce the strong convergence $\bv_\e \to \bv$ in $L^2(0,T;L^2(\Omega)^d)$. 
In few words we show with the help of some interpolation inequalities the bound of 
$\partial_t(P_\sigma(\rho_\e \bv_\e))$ in the space $L^{\frac{8}{7}}( W^1_\infty(\Omega)')$ and 
together with the bound of $P_\sigma(\rho_\e \bv_\e)$ in $L^2(0,T;H^1(\Omega)^d)$ 
this is enough to conclude with the Lemma of Aubin-Lions the strong convergence
\begin{align*}
 P_\sigma(\rho_\e \bv_\e) \to P_\sigma(\rho \bv) \; \mbox{ in } \; L^2(0,T;L^2(\Omega)^d) \,.
\end{align*}
From this we can derive $\bv_\e \to \bv$ in $L^2(0,T;L^2(\Omega)^d)$. 
For the details we refer to \cite[Sec. 5.1 and Appendix]{ADG12}.

With the last convergences and the weak convergence $\bJ_\e \rightharpoonup \bJ$ in $L^2(Q_T)$ we 
can pass to the limit $\e \to 0$ in line \eqref{approxweakline1} to achieve 
\eqref{weakline1}.

The convergence in line \eqref{approxweakline2} follows from the above weak limits of $\varphi_\e$ and 
$\bJ_\e$ in $L^2(Q_T)$ and the strong ones of $\bv_\e$ and $\nabla \varphi_\e$ in $L^2(Q_T)$.

Finally, the convergence in line \eqref{approxweakline3} can be seen as follows:
The left side converges due to the weak convergence of $\bJ_\e$ and for the right side we calculate
\begin{align} 
  \int_{Q_T} &\left(\Psi_\e'(\varphi_\e) - \sqrt{a(\varphi_\e)} \Delta A(\varphi_\e) \right) \di(m_\e(\varphi_\e) \boldsymbol{\eta}) \, dx \, dt \nonumber \\
  &= \int_{Q_T} \Psi'(\varphi_\e) \di (m_\e(\varphi_\e) \boldsymbol{\eta}) \, dx \, dt 
    + \e \int_{Q_T} \Psi_{\mbox{\footnotesize ln}}'(\varphi_\e) \di (m_\e(\varphi_\e) \boldsymbol{\eta}) \, dx \, dt \nonumber \\
  & \quad - \int_{Q_T} \sqrt{a(\varphi_\e)} \Delta A(\varphi_\e) \di (m_\e(\varphi_\e) \boldsymbol{\eta}) \, dx \, dt \,. \label{thirdconverg}
\end{align}
The first and the third term can be treated similarly as in Elliott and Garcke \cite{EG96}. For the convenience of the 
reader we give the details.

First we observe the fact that $m_\e \to m$ uniformly since for all $s \in \R$ it holds:
\begin{align*}
 |m_\e(s) - m(s)| &\leq m(1-\e) \to 0 \quad \mbox{for } \e \to 0 \,.
\end{align*}
Hence we conclude with the pointwise convergence $\varphi_\e \to \varphi$ a.e. in $Q_T$ that
\begin{align*}
 m_\e(\varphi_\e) \to m(\varphi) \quad \mbox{a.e. in } \; Q_T \,.
\end{align*}
In addition with the convergences $\Psi'(\varphi_\e) \to \Psi'(\varphi)$, $a(\varphi_\e) \to a(\varphi)$
a.e. in $Q_T$ and with the weak convergence $\Delta A(\varphi_\e) \to \Delta A(\varphi)$ in $L^2(Q_T)$ we are led to 
\begin{align*}
 \int_{Q_T} \Psi'(\varphi_\e) m_\e(\varphi_\e) \di \boldsymbol{\eta} \, dx\,dt 
   &\longrightarrow \hspace*{-2pt}
  \int_{Q_T} \Psi'(\varphi) m(\varphi) \di \boldsymbol{\eta} \, dx\,dt \; \mbox{ and } \\
 \int_{Q_T} \hspace*{-3pt} \sqrt{a(\varphi_\e)} \Delta A(\varphi_\e) m_\e(\varphi_\e) \di \boldsymbol{\eta} \, dx \hspace*{1pt} dt 
   &\longrightarrow \hspace*{-2pt}
  \int_{Q_T} \hspace*{-3pt} \sqrt{a(\varphi)} \Delta A(\varphi) m(\varphi) \di \boldsymbol{\eta} \, dx \hspace*{1pt} dt .
\end{align*}
The next step is to show that $m_\e'(\varphi_\e) \nabla \varphi_\e \to m'(\varphi) \nabla \varphi$ in $L^2(Q_T)$. 
To this end we split the integral in the following way:
\begin{align*}
 \int_{Q_T} & |m_\e'(\varphi_\e) \nabla \varphi_\e - m'(\varphi) \nabla \varphi|^2 \, dx\,dt \\
  &= \int_{Q_T \cap \{|\varphi| < 1\}} |m_\e'(\varphi_\e) \nabla \varphi_\e - m'(\varphi) \nabla \varphi|^2 \, dx\,dt \\
    & \quad + \int_{Q_T \cap \{|\varphi| = 1\}} |m_\e'(\varphi_\e) \nabla \varphi_\e - m'(\varphi) \nabla \varphi|^2 \, dx\,dt \,.
\end{align*}
Since $\nabla \varphi = 0$ a.e. on the set $\{|\varphi| = 1\}$, see for example Gilbarg and Trudinger \cite[Lem.~7.7]{GT01},
we obtain
\begin{align*}
 &\int_{Q_T \cap \{|\varphi| = 1\}} |m_\e'(\varphi_\e) \nabla \varphi_\e - m'(\varphi) \nabla \varphi|^2 \, dx\,dt \\
  & \;\; = \int_{Q_T \cap \{|\varphi| = 1\}} |m_\e'(\varphi_\e) \nabla \varphi_\e|^2 \, dx\,dt \\
 & \;\; \leq C \int_{Q_T \cap \{|\varphi| = 1\}} |\nabla \varphi_\e|^2 \, dx\,dt \longrightarrow 
         C \int_{Q_T \cap \{|\varphi| = 1\}} |\nabla \varphi|^2 \, dx\,dt = 0 \,.
\end{align*}
Although $m_\e'$ is not continuous, we can conclude on the set $\{|\varphi_\e| < 1\}$ the convergence
$m_\e'(\varphi_\e) \to m'(\varphi)$ a.e. in $Q_T$. Indeed, for a point $(x,t) \in Q_T$ with $|\varphi(x,t)| < 1$
and $\varphi_\e(x,t) \to \varphi(x,t)$, it holds that $|\varphi_\e(x,t)| < 1 - \delta$ for some $\delta > 0$ and $\e$ small enough and in 
that region $m_\e'$ and $m'$ are continuous. Hence we have
\begin{align}
 m_\e'(\varphi_\e) \nabla \varphi_\e \longrightarrow m'(\varphi) \nabla \varphi \quad \mbox{ a.e. in } \; Q_T \label{convpointwiseme}
\end{align}
and the generalized Lebesgue convergence theorem now gives
\begin{align*}
 \int_{Q_T \cap \{|\varphi| < 1\}} |m_\e'(\varphi_\e) \nabla \varphi_\e - m'(\varphi) \nabla \varphi|^2 \, dx \,dt
   \longrightarrow 0 \,,
\end{align*}
which proves finally $m_\e'(\varphi_\e) \nabla \varphi_\e \to m'(\varphi) \nabla \varphi$ in $L^2(Q_T)$.
Similarly as above, together with the convergences $\Psi'(\varphi_\e) \to \Psi'(\varphi)$, $a(\varphi_\e) \to a(\varphi)$
a.e. in $Q_T$ and with the weak convergence $\Delta A(\varphi_\e) \to \Delta A(\varphi)$ in $L^2(Q_T)$ we are led to 
\begin{align*}
 \int_{Q_T} & \Psi'(\varphi_\e) m_\e'(\varphi_\e) \nabla \varphi_\e \cdot \boldsymbol{\eta} \, dx\,dt \\ 
   &\longrightarrow 
  \int_{Q_T} \Psi'(\varphi) m'(\varphi) \nabla \varphi \cdot \boldsymbol{\eta} \, dx\,dt \; \mbox{ and } \\
 \int_{Q_T} & \sqrt{a(\varphi_\e)} \Delta A(\varphi_\e) m_\e'(\varphi_\e) \nabla \varphi_\e \cdot \boldsymbol{\eta} \, dx \, dt \\
  &\longrightarrow
   \int_{Q_T} \sqrt{a(\varphi)} \Delta A(\varphi) m'(\varphi) \nabla \varphi \cdot \boldsymbol{\eta} \, dx \, dt .
\end{align*}
Now we are left to show that the second term of the right side in \eqref{thirdconverg} converges to zero. 
To this end, we split it in the following way:
\begin{align*}
 & \e \int_{\Omega_T} \Psi_{\mbox{\footnotesize ln}}'(\varphi_\e) \di (m_\e(\varphi_\e) \boldsymbol{\eta}) \, dx \, dt  \\
   &= \e \int_{\{|\varphi_\e| \leq 1 - \e\}} \Psi_{\mbox{\footnotesize ln}}'(\varphi_\e) \di (m_\e(\varphi_\e) \boldsymbol{\eta}) \, dx \, dt \\
   & \quad + \e \int_{\{|\varphi_\e| > 1 - \e\}} \Psi_{\mbox{\footnotesize ln}}'(\varphi_\e) \di (m_\e(\varphi_\e) \boldsymbol{\eta}) \, dx \, dt  \\
   &=: (I)_\e + (II)_\e \,. 
\end{align*}
On the set $\{|\varphi_\e| \leq 1 - \e\}$ we use that $\Psi_{\mbox{\footnotesize ln}}'(\varphi_\e) = \ln(1+\varphi_\e) - \ln(1-\varphi_\e) + 2$ and
therefore $\left| \Psi_{\mbox{\footnotesize ln}}'(\varphi_\e) \right| \leq |\ln \e | + C$ to deduce that
\begin{align*}
 |(I)_\e| & \leq \e (|\ln \e| + C) \int_{Q_T} |\di (m_\e(\varphi_\e) \boldsymbol{\eta}) | \, dx \, dt \longrightarrow 0 \,.
\end{align*}
On the set $\{|\varphi_\e| > 1 - \e\}$, we use that $m_\e(\varphi_\e) = \e (2-\e)$ to deduce
\begin{align*}
 (II)_\e &= \e^2 (2-\e) \int_{\{|\varphi_\e| > 1 - \e\}} \Psi_{\mbox{\footnotesize ln}}'(\varphi_\e) \di \boldsymbol{\eta} \, dx \, dt \\
  & \leq C \e^2 \| \Psi_{\mbox{\footnotesize ln}}'(\varphi_\e) \|_{L^2(Q_T)} \\
  & = C \sqrt{\e} \left( \e^{\frac{3}{2}} \| \Psi_{\mbox{\footnotesize ln}}'(\varphi_\e) \|_{L^2(Q_T)} \right) 
  \longrightarrow 0 \,,
\end{align*}
since the last term in brackets is bounded by the energy estimate form Lemma \ref{lem:energyestimate}.

For the relation of $\widehat{\bJ}$ and $\bJ$ we note that due to $\widehat{\bJ}_\e \rightharpoonup \widehat{\bJ}$,
$\bJ_\e \rightharpoonup \bJ$ in $L^2(Q_T)$, $\bJ_\e = \sqrt{m_\e(\varphi_\e)} \, \widehat{\bJ}_\e$ and 
$\sqrt{m_\e(\varphi_\e)} \to \sqrt{m(\varphi)}$ a.e. in $Q_T$ from \eqref{convpointwiseme} we can conclude
\begin{align*}
 \bJ &= \sqrt{m(\varphi)} \, \widehat{\bJ} \,.
\end{align*}
From the weak convergence $\widehat{\bJ}_\e \rightharpoonup \widehat{\bJ}$ in $L^2(Q_T)$ we can conclude that
\begin{align*}
 \int_{Q_(s,t)} |\widehat{\bJ}|^2 \, dx \, d\tau & \leq 
  \liminf_{\e \to 0} \int_{Q_(s,t)} m_\e(\varphi_\e) |\nabla \mu_\e|^2 \, dx \, d\tau 
\end{align*}
for all $0 \leq s \leq t \leq T$
and this is enough to proceed as in Abels, Depner and Garcke \cite{ADG12} to show the energy estimate. 

Finally we just remark that the continuity properties and the initial conditions can be derived
with the same arguments as in \cite[Sec. 5.2, 5.3]{ADG12}, so that altogether we proved 
Theorem~\ref{theo:existenceweaksolution}.

\section*{Acknowledgement} 
This work was supported by the SPP 1506 "Transport Processes
at Fluidic Interfaces" of the German Science Foundation (DFG) through
the grant GA 695/6-1. The support is gratefully acknowledged.


\end{document}